\documentclass[12pt, a4paper]{article}
%\allowdisplaybreaks
\usepackage{amssymb, amsmath, amsthm}
\title{On partial sums of the M{\" o}bius and Liouville functions for number fields}
\author{Yusuke Fujisawa and Makoto Minamide\footnote{Supported by JSPS No. 23740009}}
\newtheorem{definition}{Definition}[section]
\newtheorem{theorem}[definition]{Theorem}
\newtheorem{lemma}[definition]{Lemma}
\newtheorem{proposition}[definition]{Proposition}

\allowdisplaybreaks
\begin{document}
\maketitle
\begin{abstract}
Landau examined the partial sums of the M{\"o}bius function and the Liouville function for a number field $K$. 
First we shall try again the same problem by using a new Perron's formula due to Liu and Ye. 
Next we consider the equivalent theorem of the grand Riemann hypothesis for the Dedekind zeta-function of $K$ and that of the prime ideal theorem.
\end{abstract}
%%%%%%%%%%%%%%%%%%%%%%%%%%%%%%%%%%%%%%%%%%%%%%%%%%%%%%%%%%%%%%%%%%%%
%
%                  Section 1. Intro
%
%
%%%%%%%%%%%%%%%%%%%%%%%%%%%%%%%%%%%%%%%%%%%%%%%%%%%%%%%%%%%%%%%%%%%%
\section{Introduction}

Landau first proved the prime ideal theorem and moreover examined the partial sums of the M{\" o}bius function and the Liouville function for any number field in \cite{Land0} and \cite{Land1}. 
The aim of this paper is to reconsider partial sums of the functions and show some results.

We write a complex number $s=\sigma+it$.
Let $K$ be a number field of degree $d$ over $\mathbb{Q}$, 
$\zeta_K(s)$ the Dedekind zeta-function with respect to $K$, that is, 
\begin{align*}
\zeta_{K}(s):=\sum_{\mathfrak{a}}\frac{1}{\mathbb{N}\mathfrak{a}^{s}}
= \prod_{\mathfrak{p}}\left(1-\frac{1}{\mathbb{N}\mathfrak{p}^{s}}\right)^{-1}
\quad ( \sigma >1)
\end{align*}   
where the sum is taken over all non-zero integral ideals $\mathfrak{a}$ of $K$, 
the product is taken over all prime ideals $\mathfrak{p}$ of $K$, and 
$\mathbb{N}\mathfrak{a}$ is the norm of $\mathfrak{a}$. 
The function $\zeta_{K}(s)$ extends meromorphically  to the whole complex plane, except a simple pole at $s=1$. 
We denote the residue by $c$ throughout this paper, that is,
\begin{align*}
c:=\lim_{s\to 1^{+}}(s-1)\zeta_{K}(s)=\frac{2^{r_{1}}(2\pi)^{r_{2}}hR}{w\sqrt{|d_{K}|}},
\end{align*}
where $w$ is the number roots of unity, $d_{K}$ the discriminant, $h$ the class number, $R$ the regulator of $K$, and $r_{1}$ and $r_{2}$ denote the number of real embeddings, the number of pairs of complex embeddings respectively. 

The M{\"o}bius function $\mu_{K}(\mathfrak{a})$ 
for $K$ is defined as
\begin{align*}
\mu_{K}(\mathfrak{a})=
\begin{cases}
1 & \mathfrak{a}=1,\\
 (-1)^{r} & \textrm{$\mathfrak{a}$ is a product of $r$ distinct prime ideals},\\
0 & \textrm{$\mathfrak{a}$ is divided by square of a prime ideal}, 
\end{cases}
\end{align*}
and 
the Liouville function $\lambda_{K}(\mathfrak{a})$ for $K$ is defined as 
\begin{align*}
\lambda_{K}(\mathfrak{a})=
\begin{cases}1 & \mathfrak{a}=1\\
                                          (-1)^{r} & r\textrm{ is the total number of prime divisors of }\mathfrak{a}.
\end{cases}
\end{align*}
For $x \geq 1$, we shall consider the partial sums of these functions
\begin{align*}
M_{K}(x):=\sum_{\mathbb{N}\mathfrak{a}\leq x}\mu_{K}(\mathfrak{a})\quad \textrm{and} \quad 
L_{K}(x):=\sum_{\mathbb{N}\mathfrak{a}\leq x}\lambda_{K}(\mathfrak{a}).
\end{align*}
Landau showed the following estimates on $M_{K}(x)$ and $L_{K}(x)$.
\begin{theorem}
[{Landau {\cite[p.~71 (12), p.~90, (49)]{Land0}}
(these page numbers correspond  to the collected works.)}]
\begin{align*}
M_K(x) \textrm{\ and\ }\ L_K(x)= O\left(x\exp\left(-(\log x)^{\frac{1}{12}}\right)\right).
\end{align*}
\end{theorem}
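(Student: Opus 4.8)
\noindent The plan is to treat $M_K(x)$ and $L_K(x)$ simultaneously through their Dirichlet series. Expanding the Euler product of $\zeta_K$ and using multiplicativity over prime ideals, one has
\begin{align*}
\sum_{\mathfrak{a}}\frac{\mu_K(\mathfrak{a})}{\mathbb{N}\mathfrak{a}^{s}}=\frac{1}{\zeta_K(s)},
\qquad
\sum_{\mathfrak{a}}\frac{\lambda_K(\mathfrak{a})}{\mathbb{N}\mathfrak{a}^{s}}=\frac{\zeta_K(2s)}{\zeta_K(s)}
\qquad(\sigma>1).
\end{align*}
Both generating functions have coefficients bounded by the number of ideals of a given norm, so the partial sums up to $x$ are trivially $O(x)$ (via $\sum_{\mathbb{N}\mathfrak{a}\le x}1=cx+O(x^{1-1/d})$); the content of the theorem is the saving factor $\exp(-(\log x)^{1/12})$. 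I would apply the Perron formula of Liu and Ye to each of these two series, writing
\begin{align*}
M_K(x)=\frac{1}{2\pi i}\int_{\kappa-iT}^{\kappa+iT}\frac{1}{\zeta_K(s)}\frac{x^{s}}{s}\,ds+(\text{error}),
\end{align*}
with $\kappa=1+1/\log x$, and the analogous expression for $L_K(x)$ with $\zeta_K(2s)/\zeta_K(s)$ in place of $1/\zeta_K(s)$. The advantage of the Liu--Ye formula is an explicit and comparatively clean truncation error in terms of $T$, $x$ and the coefficient sizes, which should make the subsequent optimisation transparent.

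Next I would bring in the analytic input about $\zeta_K$. Two facts are needed: a de la Vallée Poussin--type zero-free region $\zeta_K(s)\ne0$ for $\sigma\ge 1-c/\log(|t|+2)$ with $c$ depending on $K$, and the consequent bound $1/\zeta_K(s)\ll\log(|t|+2)$ in a slightly narrower region, together with the standard convexity growth of $\zeta_K$ across the critical strip coming from its functional equation with $d$ gamma factors. I would then shift the line of integration from $\kappa$ to a contour lying along $\sigma=1-\eta$ with $\eta\asymp 1/\log T$, just inside the zero-free region. Crucially, no pole is crossed in the shift: $1/\zeta_K$ is regular at $s=1$ because $\zeta_K$ has a pole there, and the only pole of $\zeta_K(2s)/\zeta_K(s)$ in the half-plane $\sigma>0$, located at $s=1/2$, lies to the left of the shifted contour. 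Hence there is no main term, which is exactly why $M_K$ and $L_K$ are $o(x)$.

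The estimation then splits into the vertical segment and the two horizontal segments. On the vertical segment $\sigma=1-\eta$ the integrand is $\ll x^{1-\eta}\log(|t|+2)/|s|$, giving a contribution $\ll x^{1-\eta}\log^{2}T=x\exp(-\eta\log x)\log^{2}T$, while the horizontals contribute $\ll (x/T)$ times a power of $\log T$, provided $T$ is chosen so that its ordinate is not too close to a zero. Balancing the saving $\exp(-\eta\log x)$ with $\eta\asymp1/\log T$ against the truncation and horizontal errors, and choosing $T$ as a suitable function of $x$, produces a bound of the shape $x\exp(-c(\log x)^{\alpha})$; the crude uniform bounds available for the reciprocal of $\zeta_K$, sharpened by the degree-$d$ growth, are what limit the exponent, and Landau's optimisation yields $\alpha=1/12$ rather than the $1/2$ one gets in the cleanest rational case. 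Since $K$ is fixed, a possible Landau--Siegel zero $\beta<1$ of $\zeta_K$ is harmless: it would contribute at most $x^{\beta}=x\exp(-(1-\beta)\log x)$, far smaller than the claimed bound.

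The main obstacle I anticipate is not the contour manipulation, which is routine, but securing the zero-free region together with a sufficiently uniform upper bound for $1/\zeta_K(s)$ (and for $\zeta_K(2s)/\zeta_K(s)$) in the $t$-aspect, since these control both the width $\eta$ to which one may shift and the size of the horizontal segments. It is precisely the interplay of these bounds with the degree $d$ in the final optimisation that pins down the exponent in $\exp(-(\log x)^{1/12})$.
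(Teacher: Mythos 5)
Your outline is sound and is, in substance, the method this paper itself uses --- but not for the quoted statement. The theorem you were asked about is Landau's 1903 result, which the paper only cites; the paper's own contribution (Theorem \ref{main-th-1}) runs exactly the contour argument you describe: the Liu--Ye Perron formula applied to $1/\zeta_K(s)$ and $\zeta_K(2s)/\zeta_K(s)$ with $b=1+1/\log x$ and $H=\sqrt{T}$ (needed because the coefficients $F(n)=O(n^{1-1/d})$ rule out the classical truncated Perron formula), the de la Vall\'ee Poussin zero-free region $\sigma>1-a/\log|t|$, the bound $1/\zeta_K(s)\ll\log|t|$ there (Proposition \ref{pro-est}), a shift to $\sigma=1-a/\log T$, and the choice $T=\exp\bigl((\log x)^{1/2}\bigr)$. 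So your proof, carried through, establishes the stronger bound $O\bigl(x\exp(-A\sqrt{\log x})\bigr)$ plus a possible $C_\beta x^\beta$ term, which of course implies the stated $O\bigl(x\exp(-(\log x)^{1/12})\bigr)$.

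The one genuine error is your closing diagnosis of where the exponent $1/12$ comes from. It is not forced by the degree $d$, by the growth $\zeta_K(s)=O(|t|^{d/2})$, or by any weakness of the bounds on $1/\zeta_K$: the zero-free region and the estimate $1/\zeta_K(s)\ll\log|t|$ have exactly the same shape as for the Riemann zeta function (only the constants depend on $K$), so the standard balancing gives exponent $1/2$ for every fixed number field, as the paper proves. The $1/12$ is an artifact of Landau's original 1903 argument, which predates the modern form of these tools; believing the method tops out at $1/12$ would mean missing the entire point of Theorem \ref{main-th-1}. Two smaller points: your assertion that ``no pole is crossed'' is inconsistent with your (correct) later acknowledgment that a real zero $\beta$ of $\zeta_K$ may lie to the right of the shifted contour and contribute a residue $C_\beta x^\beta$ --- harmless for fixed $K$ since $x^\beta=x\exp(-(1-\beta)\log x)$ is dominated by the error term, but it should be stated as a residue picked up by the shift, not waved away. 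And on the horizontal segments you should use the bound $1/\zeta_K(s)\ll\log|t|$ valid throughout the strip $1-a/\log T\le\sigma\le 1+1/\log x$, $|t|=T$, rather than appeal to convexity bounds for $\zeta_K$ itself, which say nothing directly about its reciprocal.
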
 

By using a new Perron's formula due to Liu and Ye \cite{LY} we obtain the following theorem. 

\begin{theorem}\label{main-th-1}
There exist positive constants $A$ and $B$ satisfying
\begin{align*}
M_{K}(x)=C_{\beta}x^{\beta}+O\left(x\exp\left(-A\sqrt{\log x}\right)\right),
\end{align*}
and
\begin{align*}
L_K(x) = D_{\beta} x^{\beta} +O\left(x\exp\left(-B\sqrt{\log x}\right)\right),
\end{align*}
where $\beta \in (0,1)$ denotes the exceptional zero 
(or the Siegel zero) of $\zeta_{K}(s)$, 
$C_{\beta}$ and $D_{\beta}$ are the constants depending on $\beta$. 
If the Siegel zero does not exist, 
then the terms $C_{\beta}x^{\beta}$ and $D_{\beta}x^{\beta}$ 
should be removed. 
\end{theorem}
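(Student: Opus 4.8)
The plan is to express each partial sum as a truncated contour integral by Perron's formula and then to move the line of integration into the zero-free region of $\zeta_K(s)$, collecting the residue at the exceptional zero. First I would record the Dirichlet series identities, valid for $\sigma>1$,
\[
\sum_{\mathfrak{a}}\frac{\mu_K(\mathfrak{a})}{\mathbb{N}\mathfrak{a}^{s}}=\frac{1}{\zeta_K(s)},
\qquad
\sum_{\mathfrak{a}}\frac{\lambda_K(\mathfrak{a})}{\mathbb{N}\mathfrak{a}^{s}}=\frac{\zeta_K(2s)}{\zeta_K(s)},
\]
both of which follow at once from the Euler product of $\zeta_K$ together with the (complete) multiplicativity of $\mu_K$ and $\lambda_K$ on prime ideals. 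Applying the Perron formula of Liu and Ye to each series, with $b=1+1/\log x$ and a truncation height $T$, I would obtain
\[
M_K(x)=\frac{1}{2\pi i}\int_{b-iT}^{b+iT}\frac{1}{\zeta_K(s)}\,\frac{x^{s}}{s}\,ds+E_1,
\qquad
L_K(x)=\frac{1}{2\pi i}\int_{b-iT}^{b+iT}\frac{\zeta_K(2s)}{\zeta_K(s)}\,\frac{x^{s}}{s}\,ds+E_2,
\]
where their formula bounds $E_1,E_2$ explicitly in terms of $x$, $T$ and the coefficient sums; the merit of their version is that it avoids the wasteful losses that the classical truncation incurs near the line $\sigma=1$.

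The heart of the argument is the shift of the segment $\sigma=b$ to the boundary of the classical zero-free region $\sigma\ge 1-c_1/\log\!\big(|d_K|(|t|+3)^{d}\big)$, inside which $\zeta_K(s)$ has no zero except the possible real simple exceptional zero $\beta$. Because $\zeta_K$ has a simple pole at $s=1$, the integrand is regular there (indeed $1/\zeta_K(s)$ has a zero at $s=1$), so no term of order $x$ is produced; the only residue swept out is the one at $s=\beta$, which contributes
\[
C_\beta x^{\beta}=\frac{x^{\beta}}{\beta\,\zeta_K'(\beta)}
\qquad\text{and}\qquad
D_\beta x^{\beta}=\frac{\zeta_K(2\beta)\,x^{\beta}}{\beta\,\zeta_K'(\beta)}
\]
for $M_K$ and $L_K$ respectively; the contour is kept in $\sigma>\tfrac12$, so the pole of $\zeta_K(2s)$ at $s=\tfrac12$ is never reached. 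To bound the remaining integral I would use the standard estimate $1/\zeta_K(s)\ll\log(|t|+2)$ on the shifted contour (away from $\beta$), together with the absolute convergence of $\zeta_K(2s)$ for $2\sigma>1$, which keeps that factor bounded. The contribution is dominated by the part of the contour near height $T$, where it is of size $\asymp x^{1-c_1/(d\log T)}$ up to logarithmic factors; balancing this against the Perron error and choosing $\log T\asymp\sqrt{\log x}$ yields the de la Vall\'ee Poussin saving $\exp(-A\sqrt{\log x})$, and likewise $\exp(-B\sqrt{\log x})$ for $L_K$.

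The principal obstacle is the careful treatment of the exceptional zero together with uniform control of $1/\zeta_K(s)$ on the shifted contour. One must fix the zero-free region and the accompanying bound for $1/\zeta_K$ so that the horizontal connecting segments at height $T$ can be estimated, and one must arrange the contour near the real axis so that the pole at $\beta$ is genuinely extracted as a residue rather than spoiling the bound on $1/\zeta_K$; this separation is exactly what produces the explicit secondary term $C_\beta x^{\beta}$ (resp. $D_\beta x^{\beta}$), which dominates the error precisely when $\beta$ is close to $1$. Once these analytic inputs are secured, the optimization in $T$ and the assembly of the estimates are routine.
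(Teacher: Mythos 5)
Your proposal is correct and follows essentially the same route as the paper: the Liu--Ye Perron formula applied to $1/\zeta_K(s)$ and $\zeta_K(2s)/\zeta_K(s)$ at $b=1+1/\log x$, a contour shift to $\sigma=1-a/\log T$ inside the classical zero-free region picking up the residue at the possible Siegel zero, the bound $1/\zeta_K(s)\ll\log|t|$ on the shifted contour, and the choice $\log T\asymp\sqrt{\log x}$. Your explicit expressions for $C_\beta$ and $D_\beta$ and the remark that the contour stays in $\sigma>\tfrac12$ (avoiding the pole of $\zeta_K(2s)$) are details the paper leaves implicit, but the argument is the same.
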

\vspace{0.5cm}
\noindent{\bf Remarks.} (i) In the above theorem, the terms $C_{\beta}x^{\beta}$ and $D_{\beta}x^{\beta}$ obviously are involved in the error terms. However the estimates of $O$-terms are obtained by a zero-free region of $\zeta_{K}(s)$ (Theorem \ref{zero-free}, below) and are independent of the existence of the exceptional zero. (ii) This research may be regarded as an algebraic generalization of Mertens' problem (see Odlyzko and te Riele \cite{OR} and Titchmarsh \cite[Ch.~14]{T}). (iii)  On partial sums of ``M{\"obius} functions'' concerned with Hecke operators, there are some works Goldstein \cite{Go}, Anderson \cite{An}, Grupp \cite{Gr}, and Chakraborty and the second author \cite{CM}. (iv) As further study concerning generalizations for the M{\" o}bius function, the first author investigates an algebraic generalization for Ramanujan's sums \cite{F}.  
\vspace{0.5cm}

It is well-known that the Riemann hypothesis is equivalent to 
the estimate 
$\sum_{n\leq x}\mu(n)=O\left(x^{1/2+\varepsilon}\right)$ (for any $\varepsilon >0$) 
where $\mu(\cdot)$ is the M{\"o}bius function in the usual sense. 
We also obtain the equivalent theorem to the grand Riemann hypothesis 
(GRH) for $\zeta_{K}(s)$.

\begin{theorem}\label{main-th-2}
The following three assertions are equivalent. 
\begin{itemize}
\item[$(1)$] The GRH for $\zeta_{K}(s)$ is true. 
\item[$(2)$] For any $\varepsilon>0$, 
$M_{K}(x)=O\left(x^{\frac{1}{2}+\varepsilon}\right)$.
\item[$(3)$] For any $\varepsilon>0$, 
 $L_{K}(x)=O\left(x^{\frac{1}{2}+\varepsilon}\right)$.
\end{itemize}
\end{theorem}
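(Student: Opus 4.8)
The plan is to prove the cycle $(1)\Rightarrow(2)\Rightarrow(3)\Rightarrow(1)$, which gives the full equivalence. The two facts I rely on throughout are the generating identities
\[
\sum_{\mathfrak{a}}\frac{\mu_{K}(\mathfrak{a})}{\mathbb{N}\mathfrak{a}^{s}}=\frac{1}{\zeta_{K}(s)},
\qquad
\sum_{\mathfrak{a}}\frac{\lambda_{K}(\mathfrak{a})}{\mathbb{N}\mathfrak{a}^{s}}=\frac{\zeta_{K}(2s)}{\zeta_{K}(s)}
\qquad(\sigma>1),
\]
both of which follow from the Euler product for $\zeta_{K}$ together with the (complete) multiplicativity of $\mu_{K}$ and $\lambda_{K}$.

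For $(1)\Rightarrow(2)$ I would assume GRH and start from a truncated Perron formula, for instance the Liu--Ye formula \cite{LY} already used for Theorem~\ref{main-th-1}, to write
\[
M_{K}(x)=\frac{1}{2\pi i}\int_{\kappa-iT}^{\kappa+iT}\frac{x^{s}}{s\,\zeta_{K}(s)}\,ds+(\text{error}),
\qquad \kappa=1+\tfrac{1}{\log x}.
\]
Under GRH the integrand is holomorphic in the half-plane $\sigma>\tfrac12$: the pole of $\zeta_{K}$ at $s=1$ makes $1/\zeta_{K}$ \emph{vanish} there and hence contributes no pole, while $s=0$ is never crossed. I may therefore push the line of integration to $\sigma=\tfrac12+\delta$ without picking up any residue. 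Bounding the shifted vertical integral and the two horizontal segments by means of the conditional estimate $1/\zeta_{K}(s)\ll_{\varepsilon}(|t|+2)^{\varepsilon}$ valid for $\sigma\geq\tfrac12+\delta$, and choosing the height $T$ (say $T=x$) so that the Perron error matches, yields $M_{K}(x)=O(x^{1/2+\varepsilon})$.

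For $(2)\Rightarrow(3)$ I would argue combinatorially. Factoring $\zeta_{K}(2s)/\zeta_{K}(s)=\zeta_{K}(2s)\cdot\zeta_{K}(s)^{-1}$ gives the convolution $\lambda_{K}(\mathfrak{a})=\sum_{\mathfrak{d}^{2}\mid\mathfrak{a}}\mu_{K}(\mathfrak{a}\mathfrak{d}^{-2})$, and writing $\mathfrak{a}=\mathfrak{d}^{2}\mathfrak{b}$ this becomes
\[
L_{K}(x)=\sum_{\mathbb{N}\mathfrak{d}^{2}\leq x}M_{K}\!\left(\frac{x}{\mathbb{N}\mathfrak{d}^{2}}\right).
\]
Inserting the hypothesis $M_{K}(y)=O(y^{1/2+\varepsilon})$ and using $\sum_{\mathfrak{d}}\mathbb{N}\mathfrak{d}^{-1-2\varepsilon}=\zeta_{K}(1+2\varepsilon)<\infty$ immediately produces $L_{K}(x)=O(x^{1/2+\varepsilon})$. (Symmetrically, factoring $\zeta_{K}(s)^{-1}=\zeta_{K}(2s)^{-1}\cdot\zeta_{K}(2s)/\zeta_{K}(s)$ yields $(3)\Rightarrow(2)$, so $(2)$ and $(3)$ are in fact directly equivalent.)

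Finally, for $(3)\Rightarrow(1)$ I would use the Mellin representation
\[
\frac{\zeta_{K}(2s)}{\zeta_{K}(s)}=s\int_{1}^{\infty}\frac{L_{K}(x)}{x^{s+1}}\,dx
\qquad(\sigma>1).
\]
The hypothesis makes the integral converge locally uniformly for $\sigma>\tfrac12$, so the left-hand side is holomorphic there. Since $2\sigma>1$ in this range, $\zeta_{K}(2s)$ is given by its absolutely convergent Euler product and is therefore holomorphic and non-vanishing for $\sigma>\tfrac12$; dividing, $1/\zeta_{K}(s)$ is holomorphic throughout $\sigma>\tfrac12$, whence $\zeta_{K}(s)\neq0$ for $\sigma>\tfrac12$. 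The functional equation relating $s$ to $1-s$ then forces the remaining nontrivial zeros onto the line $\sigma=\tfrac12$, which is GRH and closes the cycle. I expect the only real obstacle to be the analytic step $(1)\Rightarrow(2)$: it rests on proving the conditional bound $1/\zeta_{K}(s)\ll_{\varepsilon}(|t|+2)^{\varepsilon}$ for the Dedekind zeta-function (via the Borel--Carath\'eodory and Hadamard three-circles treatment of $\log\zeta_{K}$ under GRH, as in the rational case in Titchmarsh \cite[Ch.~14]{T}), rather than merely quoting it.
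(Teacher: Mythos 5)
Your overall strategy matches the paper's almost step for step: partial summation (equivalently, the Mellin representation of the Dirichlet series in terms of the summatory function) to get ``$O(x^{1/2+\varepsilon})$ implies GRH'', a truncated Perron formula plus a contour shift to $\sigma=\tfrac12+\delta$ together with the conditional bound $1/\zeta_{K}(\sigma+it)\ll|t|^{\varepsilon}$ (the paper's Lemma \ref{M-GRH}) for the converse, and the convolution $\lambda_{K}(\mathfrak{a})=\sum_{\mathfrak{d}^{2}\mid\mathfrak{a}}\mu_{K}(\mathfrak{a}\mathfrak{d}^{-2})$ to pass from $M_{K}$ to $L_{K}$. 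The only structural difference is that you close a single cycle $(1)\Rightarrow(2)\Rightarrow(3)\Rightarrow(1)$, while the paper proves $(2)\Rightarrow(1)$ and $(3)\Rightarrow(1)$ directly from partial summation and then $(1)\Rightarrow(2)\Rightarrow(3)$; both organizations are logically complete, and your explicit filling-in of $(3)\Rightarrow(1)$ (using that $\zeta_{K}(2s)$ is zero-free for $\sigma>\tfrac12$) is exactly what the paper leaves implicit. One concrete technical point deserves care: for $(1)\Rightarrow(2)$ you propose to reuse the Liu--Ye formula, whose error contains $O\bigl(\sum_{x-x/H<n\leq x+x/H}|a_{n}|\bigr)$. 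If you bound the coefficients only by what Weber's theorem gives (the paper's $F(n)=O(n^{1-1/d})$, or equivalently $I(x+x/H)-I(x-x/H)=2cx/H+O(x^{1-1/d})$), this error is $O(x^{1-1/d})$, which for $d\geq 3$ already exceeds the target $x^{1/2+\varepsilon}$ and no choice of $H$ repairs it. You would need the sharper pointwise bound $F(n)\ll_{\varepsilon}n^{\varepsilon}$ (available via $F(n)\leq d(n)^{d}$, but not established in the paper) to make that route close. The paper sidesteps this entirely by switching to the classical Perron formula of Titchmarsh at the half-integer $x=N+\tfrac12$ with $T=x^{2}$, for which the error is harmless; I recommend you do the same, or else supply the $n^{\varepsilon}$ coefficient bound explicitly. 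The rest of your argument, including the minor caveat that $\mu_{K}$ is multiplicative but not completely multiplicative, is sound.
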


Moreover, we remark on the prime ideal theorem. Define
the von Mangoldt function $\Lambda_K(\mathfrak{a})$ for $K$ as
\begin{align*}
\Lambda_{K}(\mathfrak{a})=
\begin{cases}
\log \mathbb{N}\mathfrak{p} & \mathfrak{a} \textrm{ is a power of a prime ideal }
 \mathfrak{p},\\
0 & \textrm{otherwise}, 
\end{cases}
\end{align*}
and put 
$\psi_{K}(x) :=\sum_{\mathbb{N}\mathfrak{a} \leq x}\Lambda_{K}(\mathfrak{a})$.
It is known that the prime ideal theorem is equivalent to $\psi_{K}(x)\sim x$.  
This is related $M_{K}(x)$ as 
\begin{theorem}\label{main-th-3}
\begin{align*}
 \psi_{K} (x) \sim x {\rm \ is\ equivalent\ to\ } M_{K}(x)=o(x).
\end{align*}
\end{theorem}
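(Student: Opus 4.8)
The plan is to prove both implications by elementary convolution identities, independently of any zero-free region, transcribing the classical argument for $K=\mathbb{Q}$ to ideals. First I would record the two identities
\[
\sum_{\mathfrak{d}\mid\mathfrak{a}}\mu_{K}(\mathfrak{d})=\begin{cases}1,&\mathfrak{a}=(1),\\ 0,&\mathfrak{a}\neq(1),\end{cases}
\qquad
\log\mathbb{N}\mathfrak{a}=\sum_{\mathfrak{d}\mid\mathfrak{a}}\Lambda_{K}(\mathfrak{d}),
\]
together with the counting estimate $N_{K}(x):=\sum_{\mathbb{N}\mathfrak{a}\le x}1=cx+O(x^{1-1/d})$, a classical consequence of the simple pole of $\zeta_{K}(s)$ at $s=1$. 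M\"obius inversion of the second identity gives $\Lambda_{K}=\mu_{K}*\log\mathbb{N}$, and, writing $-\zeta_{K}'/\zeta_{K}=\sum_{\mathfrak{a}}\Lambda_{K}(\mathfrak{a})\mathbb{N}\mathfrak{a}^{-s}$, the derivative of $\zeta_{K}(s)^{-1}=\sum_{\mathfrak{a}}\mu_{K}(\mathfrak{a})\mathbb{N}\mathfrak{a}^{-s}$ gives $\mu_{K}\log\mathbb{N}=-\mu_{K}*\Lambda_{K}$. Summing these over $\mathbb{N}\mathfrak{a}\le x$ produces the two master identities
\[
\psi_{K}(x)=\sum_{\mathbb{N}\mathfrak{d}\le x}\mu_{K}(\mathfrak{d})\,T_{K}\!\left(\tfrac{x}{\mathbb{N}\mathfrak{d}}\right),\qquad T_{K}(y):=\sum_{\mathbb{N}\mathfrak{e}\le y}\log\mathbb{N}\mathfrak{e},
\]
\[
M_{K}(x)\log x=-\sum_{\mathbb{N}\mathfrak{b}\le x}\Lambda_{K}(\mathfrak{b})\,M_{K}\!\left(\tfrac{x}{\mathbb{N}\mathfrak{b}}\right)+\int_{1}^{x}\frac{M_{K}(t)}{t}\,dt,
\]
which carry the forward and reverse implications respectively.

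For $M_{K}(x)=o(x)\Rightarrow\psi_{K}(x)\sim x$, partial summation against $N_{K}$ gives $T_{K}(y)=cy\log y-cy+O(y^{1-1/d}\log y)$, which grows faster than linearly, so I cannot feed it directly to a Tauberian lemma. Instead I would subtract a model function $W$, built as the summatory function of $\mu_{K}*u$ for a Stieltjes density $u$ chosen to match \emph{both} leading terms $cy\log y-cy$; then $\sum_{\mathbb{N}\mathfrak{d}\le x}\mu_{K}(\mathfrak{d})W(x/\mathbb{N}\mathfrak{d})=x+o(x)$ holds by telescoping through $\mu_{K}*\mathbf{1}=\delta$, while $T_{K}-W=O(y^{1-1/d}\log y)=o(y)$. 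Thus
\[
\psi_{K}(x)-x=\sum_{\mathbb{N}\mathfrak{d}\le x}\mu_{K}(\mathfrak{d})\bigl(T_{K}-W\bigr)\!\left(\tfrac{x}{\mathbb{N}\mathfrak{d}}\right)+o(x),
\]
and since $\sum_{\mathbb{N}\mathfrak{a}\le x}|\mu_{K}(\mathfrak{a})|\ll x$ and $M_{K}(x)=o(x)$, the remaining weighted sum is $o(x)$ by an Axer-type Tauberian lemma, handling the small ideals $\mathbb{N}\mathfrak{d}$ separately since the weight $T_{K}-W$ is integrable but unbounded near the origin after rescaling.

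For the converse $\psi_{K}(x)\sim x\Rightarrow M_{K}(x)=o(x)$, the integral term in the second master identity is $O(x)$ by the trivial bound $|M_{K}(y)|\le N_{K}(y)\ll y$. I would then split $\Lambda_{K}$ into its mean part (Stieltjes density $1$, legitimate because $\sum_{\mathbb{N}\mathfrak{b}\le x}\Lambda_{K}(\mathfrak{b})=x+o(x)$ is exactly the prime ideal theorem) and a fluctuation whose partial sums are $o(x)$. Axer's lemma applied to the fluctuation, with the admissible weight $M_{K}(x/\mathbb{N}\mathfrak{b})\ll x/\mathbb{N}\mathfrak{b}$, contributes $o(x)$, while the mean part reduces, after the substitution $u=x/\mathbb{N}\mathfrak{b}$, to a multiple of $x\sum_{\mathbb{N}\mathfrak{a}\le x}\mu_{K}(\mathfrak{a})/\mathbb{N}\mathfrak{a}$ up to an $O(x)$ error and a bounded multiple of $M_{K}(x)$. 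The standard elementary deduction of $\sum_{\mathbb{N}\mathfrak{a}\le x}\mu_{K}(\mathfrak{a})/\mathbb{N}\mathfrak{a}=o(1)$ from the prime ideal theorem then yields $M_{K}(x)\log x=o(x\log x)$, i.e.\ $M_{K}(x)=o(x)$.

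The main obstacle is the Tauberian passage common to both directions: converting an $o(x)$ bound for an \emph{unweighted} partial sum into an $o(x)$ bound for a \emph{weighted} convolution uniformly in the weight, which is precisely the content of Axer's theorem and where all the cancellation resides, since crude termwise estimates only give $O(x\log x)$ and lose the theorem. By contrast, the arithmetic of the number field enters only softly: the secondary term $O(x^{1-1/d}\log y)$ produced by $N_{K}(x)=cx+O(x^{1-1/d})$ contributes $O(x^{1-1/d+\varepsilon})=o(x)$ after summation and never interferes, so once the rational-integer Axer argument is in hand the transcription to ideals requires no new idea beyond replacing $\sum_{n\le x}$ by $\sum_{\mathbb{N}\mathfrak{a}\le x}$ and Stirling's formula by the asymptotic for $T_{K}$.
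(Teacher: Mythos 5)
Your overall strategy --- an elementary equivalence via convolution identities, Weber's theorem $I(x)=cx+O(x^{1-1/d})$, and a Tauberian cancellation step --- is the same as the paper's, which transcribes Apostol's Chapter~4 argument to ideals; but the execution differs at both key points and two of your claims are overstated. For $\psi_K(x)\sim x\Rightarrow M_K(x)=o(x)$ the paper does not split $\Lambda_K$ into mean plus fluctuation: it forms $J(n)=\sum_{\mathbb{N}\mathfrak{a}=n}\sum_{\mathfrak{b}\mid\mathfrak{a}}\mu_K(\mathfrak{b})\left(1-c\Lambda_K(\mathfrak{a}/\mathfrak{b})\right)$, so that $\sum_{n\le x}J(n)$ equals $1+cH_K(x)$ on one hand and $\sum_{\mathbb{N}\mathfrak{a}\le x}\mu_K(\mathfrak{a})\left(I(x/\mathbb{N}\mathfrak{a})-c\psi_K(x/\mathbb{N}\mathfrak{a})\right)$ on the other; pairing $c\psi_K$ with $I$ rather than with $y\mapsto y$ makes the main term telescope exactly to $1$ via $\mu_K*\mathbf{1}=\delta$, and the single input $I(y)-c\psi_K(y)=o(y)$ plus a splitting at $\mathbb{N}\mathfrak{a}\le x/C$ gives $o(x\log x)$ with no further lemmas. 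Your route instead leaves the main term $x\sum_{\mathbb{N}\mathfrak{a}\le x}\mu_K(\mathfrak{a})/\mathbb{N}\mathfrak{a}$, for which you invoke ``the standard elementary deduction of $\sum\mu_K(\mathfrak{a})/\mathbb{N}\mathfrak{a}=o(1)$ from the prime ideal theorem''; that deduction is itself of the same depth as the theorem being proved and is not supplied. Fortunately only $O(1)$ is needed (it yields $O(x)=o(x\log x)$), and $O(1)$ does follow elementarily from $\sum_{\mathfrak{a}}\mu_K(\mathfrak{a})I(x/\mathbb{N}\mathfrak{a})=1$ together with Weber's theorem, so this is repairable but should be argued that way. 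Similarly, your claim that Axer's lemma makes the fluctuation term $o(x)$ against the weight $M_K(x/\mathbb{N}\mathfrak{b})\ll x/\mathbb{N}\mathfrak{b}$ is too strong --- a weight of size $O(y)$ only yields $o(x\log x)$ --- but $o(x\log x)$ is all your identity for $M_K(x)\log x$ requires. For the converse, the paper makes your unspecified ``model $W$'' concrete: $f(\mathfrak{a})=\frac1c d(\mathfrak{a})-\log\mathbb{N}\mathfrak{a}-2\Delta/c$, whose summatory function is $O\left(x^{1-\frac{1}{2d}}\right)$ by the divisor-sum asymptotic in Theorem~\ref{W-theorem}, and it proves the needed Axer-type cancellation by hand via the hyperbola identity of Lemma~\ref{lem32}, estimating the three resulting sums $S_1,S_2,S_3$. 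In short, the paper buys self-containedness by choosing comparison functions that telescope exactly; your version is cleaner to state but leans on Axer's theorem as a black box and on an $o(1)$ estimate that either needs its own proof or should be weakened to the $O(1)$ that actually suffices.
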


%The above fact may be noticed by Landau and so on. 
%However, we give the proof of Theorem \ref{main-th-3} 
%since the authors can not fined it except for the case $K=\mathbb{Q}$. 
%{\bf Remark.} As analogous problems for Hecke operators, there are some works Goldstein \cite{G} and Chakraborty and the second author \cite{CM}. 
%consider the similar problems on the M{\" o}bius functions associated to 
%modular forms (resp. Maass forms) with respect to $SL(2, \mathbb{Z})$. 

\noindent{\bf Acknowledgments.} The authors are grateful to Professor Yoshio Tanigawa for telling them works of Landau and encouragement to them. Also they thank the Mathematical library of Nagoya university for showing collected works of Landau.

%%%%%%%%%%%%%%%%%%%%%%%%%%%%%%%%%%%%%%%%%%%%%%%%%%%%%%%%%%%%%%%%%%%%%%%%%%%%%%%%%%%%%%%%%%%%%%%%%%%%%%%%%%%%%%%%%%%%5
%
%
%              Section 2
%
%
%
%%%%%%%%%%%%%%%%%%%%%%%%%%%%%%%%%%%%%%%%%%%%%%%%%%%%%%%%%%%%%%%%%%%%%%%%%%%%%%%%%%%%%%%%%%%%%%%%
\section{Estimate for $M_{K}(x)$}
In this section we shall prove Theorems \ref{main-th-1} and \ref{main-th-2}. 
Our proofs are based on the following expressions for $1/\zeta_{K}(s)$ and $\zeta_{K}(2s)/\zeta_{K}(s)$,
\begin{align*}
\frac{1}{\zeta_{K}(s)}=\sum_{\mathfrak{a}}\frac{\mu_{K}(\mathfrak{a})}{\mathbb{N}\mathfrak{a}^{s}} \quad \textrm{and}\quad
\frac{\zeta_{K}(2s)}{\zeta_{K}(s)}=\sum_{\mathfrak{a}}\frac{\lambda_{K}(\mathfrak{a})}{\mathbb{N}\mathfrak{a}^s},
\end{align*}
for $\sigma >1$ and application of two types of Perron's formulas (\cite{LY} and \cite{T}) to them.
In order to prove the theorems, we shall assemble some formulas and lemmas. 
We denote by $I(x)$ the number of ideals whose norms $\leq x$  and by $d(\mathfrak{a})$ the number of ideals dividing $\mathfrak{a}$. We need the next theorem.  

\begin{theorem}\label{W-theorem}
We have following formulas.
\begin{align*}
&I(x)
=cx +O\left(x^{1-\frac{1}{d}}\right), 
\\
&\sum_{\mathbb{N}\mathfrak{a}\leq x}\frac{1}{\mathbb{N}\mathfrak{a}}
=c\log x  + \Delta + O\left(x^{-\frac{1}{d}}\right),
\\
&\sum_{\mathbb{N}\mathfrak{a}\leq x}\log \mathbb{N}\mathfrak{a}
=cx\log x -cx +O\left(x^{1-\frac{1}{d}}\log x\right),
\end{align*}
and
\begin{align*}
&\sum_{\mathbb{N}\mathfrak{a}\leq x}d(\mathfrak{a})=c^{2}x\log x +(2c\Delta -c^{2})x +O\left(x^{1-\frac{1}{2d}}\right).
\end{align*} 
where $c={\rm Res}_{s=1}\zeta_{K}(s)$, $\Delta=c+\int_1^{\infty}\frac{I(t)-ct}{t^2}dt$, and $d=[K:\mathbb{Q}]$. 
\end{theorem}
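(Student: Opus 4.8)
The plan is to take the first formula, the classical Weber--Landau ideal-counting estimate $I(x) = cx + O(x^{1-1/d})$, as the foundational input and to derive the remaining three formulas from it by elementary summation. The first formula itself I would establish through the geometry of numbers: fixing an ideal class together with a set of representatives, the count of integral ideals of norm $\leq x$ in that class equals the number of points of a fixed lattice lying inside a homogeneously expanding $d$-dimensional domain of volume proportional to $x$, and the standard boundary estimate for lattice points in such a region yields a main term linear in $x$ with an error of order $x^{1-1/d}$ contributed by the $(d-1)$-dimensional boundary; summing over the finitely many ideal classes and identifying the resulting volume constant with the residue $c$ gives the stated formula. This is the step I expect to be the main obstacle, since it requires the full analytic and geometric input (the meromorphic continuation of $\zeta_K$ and the evaluation of $c$), whereas everything afterward is bookkeeping.

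Granting the first formula, I would obtain the second and third by Abel (partial) summation against $dI(t)$. For the second, writing $\sum_{\mathbb{N}\mathfrak{a}\leq x}(\mathbb{N}\mathfrak{a})^{-1} = I(x)/x + \int_1^x I(t)t^{-2}\,dt$ and inserting the decomposition $I(t) = ct + (I(t)-ct)$ isolates the main term $c\log x$; the remaining integral $\int_1^x (I(t)-ct)t^{-2}\,dt$ converges to $\int_1^\infty (I(t)-ct)t^{-2}\,dt$ because the integrand is $O(t^{-1-1/d})$, while the tail from $x$ to $\infty$ is $O(x^{-1/d})$, and these together with the constant from $I(x)/x$ assemble exactly $\Delta$. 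For the third, the dual partial summation $\sum_{\mathbb{N}\mathfrak{a}\leq x}\log\mathbb{N}\mathfrak{a} = I(x)\log x - \int_1^x I(t)t^{-1}\,dt$ with the same substitution produces $cx\log x - cx$, the error being dominated by the contribution $O(x^{1-1/d}\log x)$ coming from $I(x)\log x$.

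For the fourth formula I would apply Dirichlet's hyperbola method to the identity $\sum_{\mathbb{N}\mathfrak{a}\leq x} d(\mathfrak{a}) = \#\{(\mathfrak{b},\mathfrak{c}) : \mathbb{N}\mathfrak{b}\cdot\mathbb{N}\mathfrak{c}\leq x\}$, splitting at $\mathbb{N}\mathfrak{b} = \sqrt{x}$ to rewrite the sum as $2\sum_{\mathbb{N}\mathfrak{b}\leq\sqrt{x}} I(x/\mathbb{N}\mathfrak{b}) - I(\sqrt{x})^2$. Substituting the first formula turns the main part into $2cx\sum_{\mathbb{N}\mathfrak{b}\leq\sqrt{x}}(\mathbb{N}\mathfrak{b})^{-1}$, which by the second formula equals $c^2 x\log x + 2c\Delta x$ up to an acceptable error; the subtracted square contributes $-c^2 x$ upon squaring $I(\sqrt{x})$; and the accumulated error terms are controlled by $x^{1-1/d}\sum_{\mathbb{N}\mathfrak{b}\leq\sqrt{x}}(\mathbb{N}\mathfrak{b})^{-(1-1/d)} = O(x^{1-1/2d})$, the last sum being estimated by a further partial summation against $I$. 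Collecting the pieces then gives $c^2 x\log x + (2c\Delta - c^2)x + O(x^{1-1/2d})$, as claimed.
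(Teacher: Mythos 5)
Your proposal is correct and follows exactly the route the paper indicates: the paper gives no proof of its own, merely citing Weber's theorem (Lang, Mitsui) for the first formula and stating that the second, third, and fourth "are obtained by the first formula," which is precisely your partial-summation and hyperbola-method derivation. Your computations of the constants ($\Delta = c + \int_1^\infty (I(t)-ct)t^{-2}\,dt$ from the boundary term plus the convergent integral, and $2c\Delta - c^2$ from the hyperbola splitting) and of the error exponents all check out.
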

The first assertion of Theorem \ref{W-theorem} is celebrated as Weber's theorem. 
This is one of the most important 
formulas for this paper (cf.~{\cite[Ch.~6, Theorem 3]{L}} or \cite[p.~5]{Mit}). 
The second, the third, and the fourth are obtained by 
the first formula (cf.~\cite[p.~9, p.~11 and p.~21]{Mit}). In the case of $K=\mathbb{Q}$, this $\Delta$ coincides with the Euler constant $\gamma$.  
The theorem above is also needed in Section 3.

%Note that $1/\zeta_{K}(s)
%=\sum_{\mathfrak{a}}\mu_{K}(\mathfrak{a})\mathbb{N}\mathfrak{a}^{-s}$,
%$\zeta_{K}(2s)/\zeta_{K}(s)
%=\sum_{\mathfrak{a}}\lambda_{K}(\mathfrak{a})\mathbb{N}\mathfrak{a}^{-s}$,
%and 
%$-\zeta_{K}^{\prime}(s)/\zeta_{K}(s)
%=\sum_{\mathfrak{a}}\Lambda_{K}(\mathfrak{a})\mathbb{N}\mathfrak{a}^{-s}$ 
%for $\sigma >1$. 
By using Hadamard and de la Vall{\'e}e Poussin's technique for $\zeta_{K}(s)$ 
we obtain the zero free region for $\zeta_{K}(s)$.  

\begin{theorem}[cf. {\cite[p.~128, Theorem 5.33]{IK}}]\label{zero-free}
There exist positive constants $a$ and $t_{0}$ satisfying
\begin{align*}
\zeta_{K}(s)\not =0\quad \textrm{for\ } \sigma >1-\frac{a}{\log |t|} \textrm{\ and\ } |t|>t_{0}.
\end{align*}
%except one possibility the Siegel zero $\beta \in (0,1)$ of $\zeta_{K}(s)$.
\end{theorem}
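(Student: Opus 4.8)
The plan is to reprove the classical Hadamard--de la Vall\'ee Poussin zero-free region in the number-field setting, the only genuinely new bookkeeping being the gamma factors attached to $\zeta_{K}(s)$. The engine is the trigonometric inequality $3+4\cos\theta+\cos2\theta=2(1+\cos\theta)^{2}\geq0$, fed by the nonnegativity of the Dirichlet coefficients of $-\zeta_{K}'/\zeta_{K}$. Indeed, for $\sigma>1$ one has
\[
-\frac{\zeta_{K}'}{\zeta_{K}}(s)=\sum_{\mathfrak{a}}\frac{\Lambda_{K}(\mathfrak{a})}{\mathbb{N}\mathfrak{a}^{s}},
\]
so that, writing $s=\sigma+it$ and taking real parts,
\[
-\operatorname{Re}\frac{\zeta_{K}'}{\zeta_{K}}(\sigma+it)=\sum_{\mathfrak{a}}\frac{\Lambda_{K}(\mathfrak{a})}{\mathbb{N}\mathfrak{a}^{\sigma}}\cos(t\log\mathbb{N}\mathfrak{a}),
\]
and since $\Lambda_{K}(\mathfrak{a})\geq0$ the above inequality yields, for every $\sigma>1$ and real $t$,
\[
3\Bigl(-\frac{\zeta_{K}'}{\zeta_{K}}(\sigma)\Bigr)+4\Bigl(-\operatorname{Re}\frac{\zeta_{K}'}{\zeta_{K}}(\sigma+it)\Bigr)+\Bigl(-\operatorname{Re}\frac{\zeta_{K}'}{\zeta_{K}}(\sigma+2it)\Bigr)\geq0.
\]

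Next I would bound each of the three terms from above. The simple pole at $s=1$ gives $-\zeta_{K}'/\zeta_{K}(\sigma)<\frac{1}{\sigma-1}+A_{1}$ for $1<\sigma\leq2$. For the other two terms I would invoke the Hadamard factorization of the completed function $\xi_{K}(s)=s(s-1)|d_{K}|^{s/2}\bigl(\pi^{-s/2}\Gamma(s/2)\bigr)^{r_{1}}\bigl((2\pi)^{-s}\Gamma(s)\bigr)^{r_{2}}\zeta_{K}(s)$, which is entire of order one. This produces the partial-fraction bound
\[
-\operatorname{Re}\frac{\zeta_{K}'}{\zeta_{K}}(\sigma+it)\leq A_{2}\log(|t|+2)-\sum_{\rho}\operatorname{Re}\frac{1}{\sigma+it-\rho},
\]
valid for $|t|\geq1$, where $\rho=\beta+i\gamma$ runs over the nontrivial zeros and each summand is positive because $\sigma>1>\beta$. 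Discarding all zeros at $\sigma+2it$ gives $-\operatorname{Re}\zeta_{K}'/\zeta_{K}(\sigma+2it)\leq A_{2}\log(|t|+2)$, while at $\sigma+it$ I would retain one zero $\rho_{0}=\beta_{0}+i\gamma_{0}$ with $\gamma_{0}=t$, obtaining $-\operatorname{Re}\zeta_{K}'/\zeta_{K}(\sigma+it)\leq A_{2}\log(|t|+2)-\frac{1}{\sigma-\beta_{0}}$.

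Substituting these three bounds into the $3$-$4$-$1$ inequality and setting $t=\gamma_{0}$ leaves
\[
\frac{4}{\sigma-\beta_{0}}\leq\frac{3}{\sigma-1}+A_{3}\log(|\gamma_{0}|+2).
\]
I would then set $\sigma=1+\delta/\log(|\gamma_{0}|+2)$ for a small constant $\delta>0$, so that $3/(\sigma-1)=(3/\delta)\log(|\gamma_{0}|+2)$ and the right-hand side is $(3/\delta+A_{3})\log(|\gamma_{0}|+2)$. Solving for $\beta_{0}$ yields
\[
\beta_{0}\leq1+\frac{\delta}{\log(|\gamma_{0}|+2)}-\frac{4\delta}{(3+A_{3}\delta)\log(|\gamma_{0}|+2)},
\]
and choosing $\delta$ small enough that $3+A_{3}\delta<4$ makes the coefficient $\delta-4\delta/(3+A_{3}\delta)$ of $1/\log(|\gamma_{0}|+2)$ negative, giving $\beta_{0}\leq1-a/\log(|\gamma_{0}|+2)$ for a suitable $a>0$. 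Renaming the height variable and enlarging $t_{0}$ then produces $\zeta_{K}(s)\neq0$ for $\sigma>1-a/\log|t|$ and $|t|>t_{0}$.

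The hard part will be the $O(\log|t|)$ control of the logarithmic derivative, that is, the passage from the Hadamard product to the partial-fraction bound: this rests on the functional equation and growth estimates for $\zeta_{K}(s)$ in vertical strips, together with a zero-counting estimate $\#\{\rho:|\gamma-t|\leq1\}\ll\log\bigl(|d_{K}|(|t|+2)^{d}\bigr)$ used to truncate the sum over $\rho$ far from $t$. All implied constants, and hence $a$ and $t_{0}$, depend on $K$ through $d$, $|d_{K}|$, $r_{1}$ and $r_{2}$; the $r_{1}+r_{2}$ gamma factors are handled uniformly by Stirling's formula. Finally, the restriction $|t|>t_{0}$ is precisely what lets the argument sidestep a possible real zero near $s=1$ (the Siegel zero $\beta$ of Theorem \ref{main-th-1}), which this method cannot exclude.
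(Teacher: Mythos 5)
Your proposal is the standard Hadamard--de la Vall\'ee Poussin argument (the $3$-$4$-$1$ trigonometric inequality combined with the Hadamard factorization of the completed zeta function), which is exactly the technique the paper invokes: it gives no proof of its own, simply citing \cite[Theorem~5.33]{IK} and naming this method. Your sketch is correct, including the final optimization in $\delta$ and the remark that the restriction $|t|>t_{0}$ is what allows a possible Siegel zero to survive.
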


We also need an estimate for $1/\zeta_{K}(s)$. 
It is deduced from the following lemma.
\begin{lemma}[cf. {\cite[Lemma 6.4]{MV}}]\label{daiji-lemma}
For $5/6\leq \sigma \leq 2$, there exist a positive constant $t_{0}$ satisfying
\begin{align*}
\frac{\zeta_{K}^{\prime}(s)}{\zeta_{K}(s)}=\sum_{\rho}\frac{1}{s-\rho}+O(\log |t|) \quad (|t|>t_{0}),
\end{align*}
where the sum is taken over all zeros $\rho$ of $\zeta_{K}(s)$ satisfying $|\rho -(3/2+it)|\leq 5/6$.
\end{lemma}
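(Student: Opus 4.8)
The plan is to follow the classical Hadamard--de la Vallée Poussin method (as in \cite{MV}), expressing $\zeta_K'/\zeta_K$ near the line $\sigma=3/2$ as a sum over the nearby zeros plus an error whose size is controlled by the logarithm of the local growth of $\zeta_K$. I would fix $s=\sigma+it$ with $5/6\le\sigma\le2$ and $|t|$ large, and center the analysis at $s_0=3/2+it$. Since the only pole of $\zeta_K$ is at $s=1$, which lies at distance $\sqrt{1/4+t^2}\asymp|t|$ from $s_0$, the function $\zeta_K$ is holomorphic in the closed disk $|z-s_0|\le r_3$ for any fixed $r_3<1$ once $|t|>t_0$. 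I would then choose three concentric radii $r_1=2/3<r_2=5/6<r_3<1$: the inner disk of radius $r_1$ contains every point $\sigma+it$ with $\sigma\in[5/6,2]$ (since $|\sigma-3/2|\le2/3$), the middle disk of radius $r_2=5/6$ is exactly where the zeros $\rho$ of the statement lie, and the outer disk of radius $r_3$ is where I control the size of $\zeta_K$.

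The substantive analytic input is a polynomial growth bound: I would show $|\zeta_K(z)|\ll|t|^{A}$ uniformly for $|z-s_0|\le r_3$, with $A$ depending only on $d=[K:\mathbb{Q}]$. This comes from the functional equation for $\zeta_K$ together with Stirling's formula for the gamma factors, a crude convexity estimate being more than enough. Combined with the trivial lower bound $|\zeta_K(s_0)|=|\zeta_K(3/2+it)|\ge\zeta_K(3/2)^{-1}>0$, this yields
\begin{align*}
M:=\max_{|z-s_0|\le r_3}\frac{|\zeta_K(z)|}{|\zeta_K(s_0)|}\ll|t|^{A},\qquad\text{so}\qquad\log M\ll\log|t|.
\end{align*}

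With these bounds I would invoke the standard auxiliary identity underlying \cite[Lemma 6.4]{MV}: for $f$ holomorphic on $|z-s_0|\le r_3$ with $f(s_0)\ne0$ and $|f(z)/f(s_0)|\le M$ there, one has, for $z$ in the inner disk $|z-s_0|\le r_1$,
\begin{align*}
\frac{f'(z)}{f(z)}=\sum_{\rho}\frac{1}{z-\rho}+O(\log M),
\end{align*}
the sum running over the zeros $\rho$ of $f$ with $|\rho-s_0|\le r_2$. Applying this with $f=\zeta_K$ and inserting $\log M\ll\log|t|$ gives precisely the claimed formula, the zeros counted being those with $|\rho-(3/2+it)|\le5/6$. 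An intermediate fact, obtained along the way from Jensen's formula and the same growth bound, is that the number of such zeros is $O(\log|t|)$; this is what keeps the error genuinely $O(\log|t|)$.

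I expect the main obstacle to be twofold. The analytic heart is the growth bound in the outer disk: one must use the functional equation and Stirling carefully enough to obtain a clean power of $|t|$ valid uniformly on $|z-s_0|\le r_3$, and the degree $d$ enters here in a way it does not for $\mathbb{Q}$. The second, more bookkeeping, difficulty is arranging the three radii $2/3<5/6<r_3<1$ so that the inner disk still covers the whole segment $5/6\le\sigma\le2$ while the outer disk avoids the pole at $s=1$; because these radii are forced close to $1$, the constants in the Borel--Carathéodory step degrade, and I would need to confirm that an admissible $r_3\in(5/6,1)$ exists keeping every implied constant finite.
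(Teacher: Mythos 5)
Your proposal is correct and follows essentially the same route as the paper, which proves the lemma by combining a polynomial growth bound on $\zeta_K$ (cited there as $\zeta_K(s)=O(|t|^{d/2})$ for $\sigma>0$, from Chandrasekharan--Narasimhan) with the argument of \cite[Lemma 6.4]{MV}. Your only deviation is deriving the growth bound from the functional equation and Stirling rather than citing it, and your worry about the radii is unfounded since the gaps $2/3<5/6<r_3<1$ are fixed and the pole at $s=1$ is at distance $\asymp|t|$ from $3/2+it$.
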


This is shown by the estimate $\zeta_{K}(s)=O\left(|t|^{d/2}\right)$ (for $\sigma >0$, see \cite{CN}.) and the similar argument \cite[p.~171, Lemma 6.4]{MV}.
An important estimate for $1/\zeta_{K}(s)$ is given by the next proposition.

\begin{proposition}\label{pro-est}
There exist positive constants $t_{0}$ and $A$ satisfying the following estimations, for $|t|>t_{0}$ and $\sigma \geq 1-\frac{A}{\log |t|}$:
\begin{align}
& \left|\frac{\zeta^{\prime}_{K}}{\zeta_{K}}\left(1+\frac{1}{\log |t|}+it\right)\right| \ll \log |t|,\label{est-1}\\ 
& \left|\frac{\zeta_{K}^{\prime}}{\zeta_{K}}(s)\right| \ll \log |t|, \label{est-2}\\
& \left|\log \zeta_{K}(s)\right|\leq \log\log |t| +O(1),\label{est-3}\\
&\left|\frac{1}{\zeta_{K}(s)}\right|\ll \log |t|.\label{est-4}
\end{align}
\end{proposition}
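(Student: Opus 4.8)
The plan is to prove \eqref{est-1}--\eqref{est-4} in that order, each estimate feeding the next. For \eqref{est-1} I would not even need the zero-sum: from the Dirichlet series $\sum_{\mathfrak{a}}\Lambda_K(\mathfrak{a})\mathbb{N}\mathfrak{a}^{-s}=-\zeta_K'(s)/\zeta_K(s)$ $(\sigma>1)$, the modulus at $s_0=\sigma_0+it$, $\sigma_0=1+1/\log|t|$, is dominated by its value on the real axis,
\begin{align*}
\left|\frac{\zeta_K'}{\zeta_K}(s_0)\right|\le-\frac{\zeta_K'}{\zeta_K}(\sigma_0),
\end{align*}
and the simple pole of $\zeta_K$ at $s=1$ with residue $c$ gives $-\zeta_K'(\sigma_0)/\zeta_K(\sigma_0)=(\sigma_0-1)^{-1}+O(1)=\log|t|+O(1)$. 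This proves \eqref{est-1}.

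For \eqref{est-2} I would compare $s$ with the point $s_0$ of the same height. Applying Lemma \ref{daiji-lemma} at both points (they share the imaginary part $t$, hence the same family of zeros) and subtracting, the identity $(s-\rho)^{-1}-(s_0-\rho)^{-1}=(\sigma_0-\sigma)\{(s-\rho)(s_0-\rho)\}^{-1}$ yields
\begin{align*}
\frac{\zeta_K'}{\zeta_K}(s)=\frac{\zeta_K'}{\zeta_K}(s_0)+(\sigma_0-\sigma)\sum_{\rho}\frac{1}{(s-\rho)(s_0-\rho)}+O(\log|t|).
\end{align*}
The first term is $O(\log|t|)$ by \eqref{est-1}. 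Taking the real part of Lemma \ref{daiji-lemma} at $s_0$, where $\mathrm{Re}\,(s_0-\rho)^{-1}=(\sigma_0-\beta)|s_0-\rho|^{-2}\ge0$ and $\sigma_0-\beta\ge1/\log|t|$, and invoking \eqref{est-1} once more, gives the mean-square bound $\sum_{\rho}|s_0-\rho|^{-2}\ll(\log|t|)^2$. Choosing $A<a$ (the constant of Theorem \ref{zero-free}) forces $\beta<1-a/\log|t|$ while $\sigma\ge1-A/\log|t|$, so $\sigma-\beta\gg1/\log|t|$; as $|\sigma_0-\sigma|\ll1/\log|t|$ the two distances are comparable, $|s-\rho|\asymp|s_0-\rho|$, and the mean-square bound transfers to $s$. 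Cauchy--Schwarz then gives $\sum_{\rho}|(s-\rho)(s_0-\rho)|^{-1}\ll(\log|t|)^2$, and the factor $|\sigma_0-\sigma|\ll1/\log|t|$ reduces the whole sum to $O(\log|t|)$, which is \eqref{est-2}.

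Estimates \eqref{est-3} and \eqref{est-4} then follow by integration. The segment from $s$ to $s_0$ lies in the zero-free region, so $\log\zeta_K$ is single-valued there and
\begin{align*}
\log\zeta_K(s)=\log\zeta_K(s_0)-\int_{\sigma}^{\sigma_0}\frac{\zeta_K'}{\zeta_K}(u+it)\,du.
\end{align*}
By \eqref{est-2} the integrand is $O(\log|t|)$ on an interval of length $|\sigma_0-\sigma|\ll1/\log|t|$, so the integral is $O(1)$; and $|\log\zeta_K(s_0)|\le\sum_{\mathfrak{p},\,k\ge1}(k\,\mathbb{N}\mathfrak{p}^{k\sigma_0})^{-1}=\log\zeta_K(\sigma_0)=\log\log|t|+O(1)$, again from the pole at $s=1$. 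This gives \eqref{est-3}. Finally $|1/\zeta_K(s)|=\exp(-\mathrm{Re}\log\zeta_K(s))\le\exp(|\log\zeta_K(s)|)\le\exp(\log\log|t|+O(1))\ll\log|t|$, which is \eqref{est-4}.

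The delicate point is \eqref{est-2}. A direct term-by-term estimate of $\sum_{\rho}(s-\rho)^{-1}$ gives only $O((\log|t|)^2)$, since Lemma \ref{daiji-lemma} carries $\ll\log|t|$ zeros and each may lie within $\asymp1/\log|t|$ of $s$, with no a priori obstruction to the zeros clustering at height $t$. What rescues a full logarithm is the interplay of two features: the telescoped difference supplies the gain $|\sigma_0-\sigma|\ll1/\log|t|$, while the nonnegativity behind \eqref{est-1} supplies the mean-square bound $\sum_{\rho}|s_0-\rho|^{-2}\ll(\log|t|)^2$; Cauchy--Schwarz marries them into $O(\log|t|)$ independently of any clustering. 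Establishing the uniform comparability $|s-\rho|\asymp|s_0-\rho|$ throughout the zero-free region, which is what lets the mean-square bound pass from $s_0$ to $s$, is the step I expect to require the most care.
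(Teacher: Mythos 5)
Your proposal is correct and follows essentially the same route as the paper's proof, which is only sketched there with a pointer to Montgomery--Vaughan, Ch.~6: \eqref{est-1} from the Dirichlet series and the pole at $s=1$, \eqref{est-2} by comparing $s$ with $s_0=1+1/\log|t|+it$ in Lemma \ref{daiji-lemma} together with the nonnegativity of $\mathrm{Re}\,(s_0-\rho)^{-1}$, and \eqref{est-3}--\eqref{est-4} by integrating $\zeta_K'/\zeta_K$ along the horizontal segment. You have merely filled in the standard details (the mean-square bound $\sum_\rho|s_0-\rho|^{-2}\ll(\log|t|)^2$ and the comparability $|s-\rho|\asymp|s_0-\rho|$) that the paper leaves implicit.
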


\begin{proof} 
We give only a brief sketch of the proof because 
these are deduced from the same arguments in the theory of the Riemann zeta-function, 
 for example see \cite[Ch.~6]{MV}.

The trivial bound $I(x)=O(x)$ leads (\ref{est-1}). 

For $\sigma \geq 1+\frac{1}{\log |t|}$, 
one see that $\zeta_{K}^{\prime}(s)/\zeta_{K}(s)=O(\log |t|)$ by (\ref{est-1}). 
Taking $s_{1}=1+1/\log |t| +it $ in Lemma \ref{daiji-lemma}, we have 
\begin{align*}
\textrm{Re\,}\sum_{\rho}\frac{1}{s_{1}-\rho}=O(\log |t|)
\end{align*} 
by (\ref{est-1}) again, 
 by Lemma \ref{daiji-lemma} and the above, 
the assertion (\ref{est-2}) will be obtain.

For $\sigma >1$ it is easily seen that
\begin{align*}
\log \zeta_{K}(s)  = \frac{1}{\sigma -1}+O(1).
\end{align*}
Then for $\sigma \geq 1+\frac{1}{\log |t|}$, $|\log \zeta_{K}(s)|\leq \log\log |t| +O(1)$. When $1-\frac{A}{\log |t|}\leq \sigma \leq 1+\frac{1}{\log |t|}$, 
the equality 
\begin{align*}
\log \zeta_{K}(s)-\log \zeta_{K}(s_{1}) = \int_{s_{1}}^{s}\frac{\zeta_{K}^{\prime}(w)}{\zeta_{K}(w)}dw
\end{align*} 
and (\ref{est-2}) deduce $(\ref{est-3})$ (the above $A$ comes from Theorem \ref{zero-free}).

The estimate (\ref{est-4}) is a simple consequence of (\ref{est-3}).
\end{proof}

In addition, we shall use a  new Perron's formula due to Liu and Ye \cite{LY}.  The Dedekind zeta function is expressed as 
$\zeta_K(s)=\sum_{n=1}^{\infty}F(n)n^{-s}$
, where $F(n)=\sum_{\mathbb{N}\mathfrak{a}=n}1=O(n^{1-\frac{1}{d}})$ by 
Theorem \ref{W-theorem}. 
This estimate of $F(n)$ does  not suit to use the classical Perron's 
formula \cite[p.~60, Lemma]{T}. 
\begin{theorem}[{\cite[p.~483, Theorem 2.1]{LY}}]\label{newpe}
 Let $f(s)=\sum_{n=1}^{\infty}a_{n}n^{-s}$ which converges absolutely 
 $\sigma >\sigma_{a}(>0)$, and $B(\sigma)=\sum_{n=1}^{\infty}|a_{n}|n^{-\sigma}$. 
 Then for $b>\sigma_{a}$, $x\geq 2$, $T\geq 2$,and $H\geq 2$ we have
\begin{align*}
\sum_{n \leq x}a_{n}=\frac{1}{2\pi i}\int_{b-iT}^{b+iT}f(s)\frac{x^{s}}{s}ds +O\left(\sum_{x-x/H<n\leq x+x/H}|a_{n}|\right)+O\left(\frac{x^{b}HB(b)}{T}\right).
\end{align*}
\end{theorem}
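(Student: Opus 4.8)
The plan is to derive this truncated Perron formula from the classical discontinuous-integral lemma, using the extra parameter $H$ to isolate the terms with $n$ close to $x$. First I would insert the Dirichlet series and interchange summation and integration, which is legitimate because the series converges absolutely on the line $\sigma=b>\sigma_a$ and the contour has finite length, so that $\sum_n|a_n|(x/n)^b\int_{-T}^{T}|b+it|^{-1}\,dt<\infty$ and Fubini applies:
\begin{align*}
\frac{1}{2\pi i}\int_{b-iT}^{b+iT}f(s)\frac{x^{s}}{s}\,ds
=\sum_{n=1}^{\infty}a_{n}\cdot\frac{1}{2\pi i}\int_{b-iT}^{b+iT}\left(\frac{x}{n}\right)^{s}\frac{ds}{s}.
\end{align*}

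The key ingredient is the classical estimate for the discontinuous integral (cf.\ \cite{T}): writing $\delta(y)=1$ for $y>1$ and $\delta(y)=0$ for $0<y<1$, one has, for $y\neq 1$,
\begin{align*}
\frac{1}{2\pi i}\int_{b-iT}^{b+iT}y^{s}\frac{ds}{s}
=\delta(y)+O\left(y^{b}\min\left(1,\frac{1}{T|\log y|}\right)\right),
\end{align*}
proved by shifting the contour to $-\infty$ (for $y>1$, collecting the residue $1$ at $s=0$) or to $+\infty$ (for $0<y<1$) and estimating the horizontal segments. Applying this with $y=x/n$ collapses $\sum_n a_n\delta(x/n)$ to $\sum_{n<x}a_n$, and the boundary contribution at $n=x$ (where the integral equals $\tfrac12+O(b/T)$) differs from $a_x$ by $O(|a_x|)$, to be absorbed below. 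Hence the total error contributed is
\begin{align*}
O\left(\sum_{n\neq x}|a_{n}|\left(\frac{x}{n}\right)^{b}\min\left(1,\frac{1}{T|\log(x/n)|}\right)+|a_{x}|\right).
\end{align*}

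I would then split this sum at the scale $|n-x|=x/H$. For $n\in(x-x/H,\,x+x/H)$ the ratio $x/n$ lies in the fixed interval $(2/3,2)$ when $H\ge 2$, so $(x/n)^{b}\ll 1$, and using $\min(\cdots)\le 1$ these terms contribute $O\!\left(\sum_{x-x/H<n\le x+x/H}|a_n|\right)$, into which the stray $|a_x|$ is also absorbed; this is the first error term. For $n$ outside this block an elementary inequality gives $|\log(x/n)|\gg 1/H$, whence $\min(\cdots)\ll H/T$, and summing against $|a_n|(x/n)^b$ together with $\sum_n|a_n|(x/n)^b=x^{b}B(b)$ yields $O(x^{b}HB(b)/T)$, the second error term. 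Combining the two ranges gives the claimed formula.

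The step I expect to be the main obstacle is the sharp control of the discontinuous integral in the near-diagonal regime $y\approx 1$: one must secure the bound $O(y^b)$ there without the spurious logarithm that the crude modulus estimate $\int_{-T}^{T}dt/|b+it|\asymp\log T$ would introduce, since it is precisely this that lets the central block be estimated by the bare sum $\sum_{x-x/H<n\le x+x/H}|a_n|$. Once this is in hand the role of $H$ is transparent: cutting at the geometric scale $x/H$ rather than the arithmetic scale $x/T$ of the classical formula cleanly separates the block where only the trivial bound is available from the range where the decay $1/(T|\log(x/n)|)$ can be summed, and verifying $|\log(x/n)|\gg 1/H$ for $|n-x|>x/H$ is the small technical core that makes the split work.
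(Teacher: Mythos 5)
The paper does not prove this statement at all: it is imported verbatim from Liu and Ye \cite[Theorem~2.1]{LY}, so there is no in-paper proof to compare against. Your argument is correct and is essentially the standard (and Liu--Ye's own) derivation: interchange sum and integral, apply the classical discontinuous-integral bound $\delta(y)+O\bigl(y^{b}\min(1,\tfrac{1}{T|\log y|})\bigr)$ with $y=x/n$, and split at $|n-x|\leq x/H$, using the trivial $\min(\cdot)\leq 1$ bound in the central block and $|\log(x/n)|\gg 1/H$ outside it. The one ingredient you flag but do not carry out --- the bound $O(y^{b})$ uniformly as $y\to 1$, obtained by replacing the vertical segment with a circular arc through $b\pm iT$ rather than by the crude modulus estimate --- is exactly the classical lemma (Titchmarsh's Lemma~3.12, Montgomery--Vaughan Theorem~5.2), so citing it closes the argument.
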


\noindent
{\it Proof of Theorem \ref{main-th-1}.} 
We show the case of the  M{\"o}bius function.  
The assertion for the Liouville function follows in the same way. 
Applying Theorem \ref{newpe}
 for $f(s)=1/\zeta_{K}(s)$, $b=1+1/\log x$, and $H=\sqrt{T}$ we have 
\begin{align*}
\sum_{\mathbb{N}\mathfrak{a}\leq x}\mu_{K}(\mathfrak{a})=\frac{1}{2\pi i}\int_{1+\frac{1}{\log x}-iT}^{1+\frac{1}{\log x}+iT}\frac{1}{\zeta_{K}(s)}\frac{x^{s}}{s}ds +O\left(\frac{x\log x}{\sqrt{T}}\right).
\end{align*}
By the residue theorem with Theorem \ref{zero-free} we have
\begin{align*}
& \frac{1}{2\pi i}
\int_{1+\frac{1}{\log x}-iT}^{1+\frac{1}{\log x}+iT}
\frac{1}{\zeta_{K}(s)}\frac{x^{s}}{s}ds\\
&=C_{\beta}x^{\beta} +\frac{1}{2\pi i}\left(
 \int_{1-\frac{a}{\log T}-iT}^{1-\frac{a}{\log T}+iT}
+\int_{1-\frac{a}{\log T}+iT}^{1+\frac{1}{\log x}+iT} 
-\int_{1-\frac{a}{\log T}-iT}^{1+\frac{1}{\log x}-iT}
\right)
\frac{1}{\zeta_{K}(s)}\frac{x^{s}}{s}ds,
\end{align*}
where $\beta$ denotes the Siegel zero, $C_{\beta}$ is the constant depending on $\beta$. The constant  
$a$ comes from Theorem \ref{zero-free}. 

By Lemma \ref{pro-est} the first integral in the right hand side is estimated as
\begin{align*}
&\ll \left( \int_{t_{0}}^{T}+\int_{-t_{0}}^{t_{0}}+\int_{-T}^{-t_{0}} \right)
\left|\frac{1}{\zeta_{K}(1-\frac{a}{\log T}+it)}\right|\frac{x^{1-\frac{a}{\log T}}}{t}dt\\
&\ll \frac{x(\log T)^{2}}{e^{a\frac{\log x}{\log T}}} + \frac{x}{e^{a\frac{\log x}{\log T}}}.
\end{align*}
The second and third integrals are
\begin{align*}
\ll \int_{1-\frac{a}{\log T}}^{1+\frac{1}{\log T}}\log T\cdot \frac{x^{\sigma}}{T}d\sigma \ll \frac{x\log T}{T}.
\end{align*} 
Here we choose $T=\exp\left((\log x)^{1/2}\right)$, then we obtain our assertion. 
\begin{flushright}
$\square$
\end{flushright}

%\noindent{\bf Remark.} 
%Landau \cite[p.~85]{Land0} investigated the number of square free %ideals as  
%\begin{align*}
%Q(x):=\sum_{\mathbb{N}\mathfrak{a}\leq x}|\mu_{K}(\mathfrak{a})|%=\frac{c}{\zeta_{K}(2)}x+O\left(x^{1-\frac{1}{d}}\right) \quad (d\geq %3). 
%\end{align*}
% In virtue of the formula $I(x)=cx+O(x^{1-\frac{2}{d+1}})$ 
% which is also due to Landau \cite{Landau3}, 
%the error term of $Q(x)$ is improved as $O(x^{1-\frac{2}{d+1}})$.

In the remainder of this section, we will prove Theorem \ref{main-th-2}. 
\begin{lemma}\label{M-GRH} Assume the GRH, then we have
\begin{align*}
\log \zeta_{K}(s) = O\left((\log |t|)^{2-2\sigma +\varepsilon}\right)
\end{align*} 
for $1/2<\sigma_{0}\leq \sigma \leq 1$. From the fact we see that for any positive $\varepsilon$ and sufficiently large $|t|>t_{0}>0$
\begin{align*}
\frac{1}{\zeta_{K}(\sigma +it)}= O\left(|t|^{\varepsilon}\right) \quad (\sigma \geq \sigma_0  >1/2). 
\end{align*}
\end{lemma}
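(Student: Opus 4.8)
The plan is to establish the first estimate and then deduce the second by exponentiation. For the first estimate one cannot simply integrate the partial-fraction formula of Lemma~\ref{daiji-lemma} along a horizontal segment: under the GRH that route yields only $\log\zeta_K(s)\ll\log|t|$, whereas the target exponent $2-2\sigma$ is $<1$ for $\sigma>1/2$. Instead I would use Littlewood's method (cf.\ \cite[Ch.~14]{T} and \cite{MV}): a truncated, smoothed explicit formula which, for a parameter $y\ge2$, represents $\log\zeta_K(s)$ as a short Dirichlet sum together with a sum over the nontrivial zeros and negligible terms,
\begin{align*}
\log\zeta_K(s)=\sum_{\mathbb N\mathfrak a\le y}\frac{\Lambda_K(\mathfrak a)}{\mathbb N\mathfrak a^{s}\log\mathbb N\mathfrak a}
+\frac{1}{\log y}\sum_{\rho}\frac{y^{\rho-s}}{(\rho-s)^{2}}
+(\text{negligible terms}).
\end{align*}
Such a formula is obtained, exactly as for $\zeta(s)$, from the Hadamard product and functional equation of $\zeta_K$ by applying a weighted Perron integral to $\zeta_K'/\zeta_K$ and integrating in $s$ from a base point on $\sigma=2$, where $\log\zeta_K=O(1)$. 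Because it displays the complex quantity $\log\zeta_K(s)$ itself, bounding each piece in modulus bounds $|\log\zeta_K(s)|$ and makes a separate treatment of $\arg\zeta_K$ unnecessary.

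Next I would estimate the two main pieces and then balance $y$. Using $\sum_{\mathbb N\mathfrak a=n}\Lambda_K(\mathfrak a)\ll_d\log n$ (a consequence of Theorem~\ref{W-theorem}), the Dirichlet sum is $\ll\sum_{n\le y}n^{-\sigma}\ll y^{1-\sigma}\log y+\log y$, uniformly for $\sigma_0\le\sigma\le1$. For the zero sum the GRH is decisive: every nontrivial zero has $\operatorname{Re}\rho=1/2$, so $|y^{\rho-s}|=y^{1/2-\sigma}$ and
\begin{align*}
\frac{1}{\log y}\sum_{\rho}\frac{|y^{\rho-s}|}{|\rho-s|^{2}}
\ll\frac{y^{1/2-\sigma}}{\log y}\sum_{\gamma}\frac{1}{(\sigma-\tfrac12)^{2}+(t-\gamma)^{2}}
\ll_{\sigma_0}\;y^{1/2-\sigma}\,\log|t|,
\end{align*}
where the last sum is $O(\log|t|)$ by the zero-density bound $N_K(T+1)-N_K(T)\ll\log T$ contained in Lemma~\ref{daiji-lemma}. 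Choosing $y=(\log|t|)^{2}$ makes the Dirichlet sum $\ll(\log|t|)^{2-2\sigma}\log\log|t|$ and the zero sum $\ll(\log|t|)^{2-2\sigma}$; both are $\ll(\log|t|)^{2-2\sigma+\varepsilon}$, which is the first assertion.

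The second assertion is then immediate. Since $1/\zeta_K(s)=\exp(-\log\zeta_K(s))$,
\begin{align*}
\left|\frac{1}{\zeta_K(\sigma+it)}\right|
=\exp\!\bigl(-\log|\zeta_K(\sigma+it)|\bigr)
\le\exp\!\bigl(|\log\zeta_K(\sigma+it)|\bigr)
\ll\exp\!\bigl((\log|t|)^{2-2\sigma+\varepsilon}\bigr).
\end{align*}
For $\sigma\ge\sigma_0>1/2$ and $\varepsilon$ chosen with $2-2\sigma_0+\varepsilon<1$, the exponent satisfies $2-2\sigma+\varepsilon<1$, hence $(\log|t|)^{2-2\sigma+\varepsilon}=o(\log|t|)$ and the right-hand side is $\exp(o(\log|t|))=|t|^{o(1)}=O(|t|^{\varepsilon'})$ for every $\varepsilon'>0$.

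I expect the main obstacle to be the clean derivation of the explicit formula for $\log\zeta_K(s)$, in particular arranging the zero sum so that it converges with the factor $(\rho-s)^{-2}$ (via the smoothing, or equivalently via truncating the sum at a height $T$ with an explicit error term) and keeping the archimedean and trivial-zero contributions genuinely negligible. Every input specific to $K$ needed along the way is available—the convexity bound $\zeta_K(s)=O(|t|^{d/2})$, the zero count $N_K(T+1)-N_K(T)\ll\log T$, and $\sum_{\mathbb N\mathfrak a=n}\Lambda_K(\mathfrak a)\ll_d\log n$—so apart from this bookkeeping the argument runs parallel to the rational case, the balancing choice $y=(\log|t|)^{2}$ being exactly what converts the on-the-line location of the zeros under the GRH into the exponent $2-2\sigma$.
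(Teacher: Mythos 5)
Your argument is sound in substance, but it takes a genuinely different route from the one the paper relies on. The paper gives no proof of Lemma \ref{M-GRH} at all: it simply cites Theorem 14.2 of Titchmarsh, whose classical proof proceeds via the Borel--Carath\'eodory theorem (using $\log|\zeta_K(s)|\ll\log|t|$ from the convexity bound $\zeta_K(s)=O(|t|^{d/2})$, together with the analyticity of $\log\zeta_K$ in $\sigma>1/2$, $|t|>t_0$ under the GRH) followed by the Hadamard three-circles theorem on circles centred at $2+it$, interpolating between $O(1)$ on $\sigma=2$ and $O(\log|t|)$ near the critical line; the interpolation exponent is exactly what produces $(\log|t|)^{2-2\sigma+\varepsilon}$. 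Your route via the smoothed explicit formula of Littlewood--Selberg type (cf.\ Titchmarsh \S 14.33, or Montgomery--Vaughan) reaches the same bound, and your estimates check out: the prime-ideal sum is controlled by $\sum_{\mathbb{N}\mathfrak{a}=n}\Lambda_K(\mathfrak{a})\le d\,\Lambda(n)\ll\log n$ (this follows from there being at most $d$ prime ideals above each rational prime rather than from Theorem \ref{W-theorem}, but it is standard), the zero sum is handled correctly using $\operatorname{Re}\rho=1/2$ and the unit-interval zero count, the choice $y=(\log|t|)^{2}$ balances the two terms, and the exponentiation step with $2-2\sigma_0+\varepsilon<1$ correctly yields $|t|^{o(1)}$. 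The trade-off is that the three-circles argument needs almost no $K$-specific bookkeeping, whereas your method requires the derivation of the explicit formula (archimedean factors, the pole at $s=1$, trivial zeros) that you yourself flag as the main obstacle; in exchange it is more quantitative and displays the complex logarithm directly, so no separate treatment of $\arg\zeta_K$ is needed. One small point: the second assertion is claimed for all $\sigma\ge\sigma_0$, while the first covers only $\sigma\le 1$; for $\sigma\ge 1$ the bound is immediate (e.g.\ from (\ref{est-4}) or from extending the first estimate up to $\sigma=3/2$ as Titchmarsh does), so this is cosmetic rather than a gap.
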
 

This is an analogous result of Theorem 14.2 of \cite[p.~336]{T}. \\
{\it Proof of Theorem \ref{main-th-2}.}  
First, assume that 
$M_K(x)=O\left(x^{\frac{1}{2}+\varepsilon}\right)$ for any $\varepsilon>0$. 
Then we have
\begin{align*}
\sum_{\mathbb{N}\mathfrak{a}\leq x}
\frac{\mu_{K}(\mathfrak{a})}{\mathbb{N}\mathfrak{a}^{s}}
=O\left(x^{\frac{1}{2}+\varepsilon -\sigma}\right)+ s\int_{1}^{x}\frac{\sum_{\mathbb{N}\mathfrak{a} \leq u}\mu_{K}(\mathfrak{a})}{u^{s+1}}du
\end{align*}
by partial summation.
For $\sigma >1/2+\varepsilon$, we see 
$\sum_{\mathfrak{a}}\mu_{K}(\mathfrak{a})/\mathbb{N}\mathfrak{a}^{s}$ converges, 
that is, $1/\zeta_{K}(s)$ is analytic. Hence the GRH holds. 
Since this argument is valid for $\zeta_{K}(2s)/\zeta_{K}(s)$, 
we can show the condition $L_K(x)=O\left(x^{\frac{1}{2}+\varepsilon}\right)$ 
implies the GRH. 

Next, we will show that the GRH implies for any $\varepsilon >0$, 
$M_K(x)=O\left(x^{\frac{1}{2}+\varepsilon}\right)$ 
by arguments similar to \cite[p.~370]{T}. 
We now use Perron's formula \cite[p.~60, Lemma]{T} with the GRH. 
Then we have
\begin{align*}
&\sum_{\mathbb{N}\mathfrak{a}\leq N+\frac{1}{2}}\mu_{K}(\mathfrak{a}) 
= \frac{1}{2\pi i} \int_{2-iT}^{2+iT}\frac{1}{\zeta_{K}(w)}\frac{x^{w}}{w}dw +O\left(\frac{x^{2}}{T}\right)\\
 &= -\frac{1}{2\pi i}
 \left(
 \int_{2+iT}^{\frac{1}{2}+\delta +iT} 
 +\int_{\frac{1}{2}+\delta +iT}^{\frac{1}{2}+\delta -iT} 
 +\int_{\frac{1}{2}+\delta -iT}^{2-iT}
 \right)
 \frac{1}{\zeta_{K}(w)}\frac{x^{w}}{w}dw 
 +O\left(\frac{x^{2}}{T}\right),
\end{align*} 
where $x=N+\frac{1}{2}$ ($N$ is a large natural number) and $\delta >0$ is any small positive number.
By using Lemma \ref{M-GRH} and taking $T=x^{2}$, our assertion is proved.

Finally, suppose that 
$M_K(x)=O\left(x^{\frac{1}{2}+\varepsilon}\right)$ for any $\varepsilon>0$. 
We see that 
$\lambda_K(\mathfrak{a})=\sum_{\mathfrak{b}^2 | \mathfrak{a}}
\mu_K(\frac{\mathfrak{a}}{\mathfrak{b}^2})$ and  
$\sum_{\mathbb{N}\mathfrak{c}\leq x}
\frac{\mu_K(\mathfrak{c})}{\sqrt{\mathbb{N}\mathfrak{c}}}
=O\left(x^{\varepsilon}\right)$ by partial summation. 
Therefore we obtain
\begin{align*}
L_K(x) &=\sum_{\mathbb{N}\mathfrak{a}\leq x}
\sum_{\mathfrak{b}^2 | \mathfrak{a}}
\mu_K \left( \frac{\mathfrak{a}}{\mathfrak{b}^2} \right) 
= \sum_{\mathbb{N}\mathfrak{c} \leq x}
\sum_{\mathbb{N}\mathfrak{b}^2 \leq \frac{x}{\mathbb{N}\mathfrak{c}}}
\mu_K(\mathfrak{c}) \\
&= \sum_{\mathbb{N}\mathfrak{c} \leq x}\mu_K(\mathfrak{c})
\sum_{\mathbb{N}\mathfrak{b}^2 \leq \frac{x}{\mathbb{N}\mathfrak{c}}}
1 \ll 
\sqrt{x}
\sum_{\mathbb{N}\mathfrak{c}\leq x}
\frac{\mu_K(\mathfrak{c})}{\sqrt{\mathbb{N}\mathfrak{c}}}
=O\left(x^{\frac{1}{2}+\varepsilon}\right).
\end{align*}
This completes the proof of Theorem \ref{main-th-2}. 
\begin{flushright}
$\square$
\end{flushright}

%%%%%%%%%%%%%%%%%%%%%%%%%%%%%%%%%%%%%%%%%%%%%%%%%%%%%%%%%%%%%%%%%%%%%%%%%%%%%%%%%%%%%%%%%%%%%%%%%%%%%%%%%%%%%%%%%%%%5
%
%
%              Section 3
%
%
%
%%%%%%%%%%%%%%%%%%%%%%%%%%%%%%%%%%%%%%%%%%%%%%%%%%%%%%%%%%%%%%%%%%%%%%%%%%%%%%%%%%%%%%%%%%%%%%%%

\section{The prime ideal theorem and $M_{K}(x)$}
In this final section,following the arguments due to Apostol \cite[Ch.~4, Sec.~9]{Ap} we will prove Theorem  \ref{main-th-3}. 
At first we prepare some lemmas.

\begin{lemma}[{cf. \cite[p.~65, Theorem 3.10]{Ap}}]\label{lem31} 
Let $f$ and $g$ be functions defined on sets of integral ideals of $K$.  
Denote $f * g$ the Dirichlet convolution $f$ and $g$, i.e., $f*g (\mathfrak{a})=\sum_{\mathfrak{d}|\mathfrak{a}}f(\mathfrak{d})g(\mathfrak{a}/\mathfrak{d})$, and put $G(x)=\sum_{\mathbb{N}\mathfrak{a}\leq x}g(\mathfrak{a})$.
Then we have
\begin{align*}
\sum_{\mathbb{N}\mathfrak{a}\leq x}f*g(\mathfrak{a})=\sum_{\mathbb{N}\mathfrak{a}\leq x}
 f(\mathfrak{a})G\left(\frac{x}{\mathbb{N}\mathfrak{a}}\right).
\end{align*} 
\end{lemma}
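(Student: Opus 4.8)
The statement is the ideal-theoretic analogue of the elementary identity
$\sum_{n\le x}(f*g)(n)=\sum_{n\le x}f(n)G(x/n)$, so I would follow the classical proof verbatim, replacing integers by integral ideals and ordinary absolute value by the norm $\mathbb{N}$. The plan is to start from the definition of the Dirichlet convolution and unfold the left-hand side into a double sum over ideals, then reorganize that double sum by summing first over the inner divisor. Concretely, I would write
\begin{align*}
\sum_{\mathbb{N}\mathfrak{a}\le x}(f*g)(\mathfrak{a})
=\sum_{\mathbb{N}\mathfrak{a}\le x}\ \sum_{\mathfrak{d}\mid\mathfrak{a}}
 f(\mathfrak{d})\,g\!\left(\frac{\mathfrak{a}}{\mathfrak{d}}\right).
\end{align*}

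The key step is the change of the order of summation. I would reparametrize the pairs $(\mathfrak{a},\mathfrak{d})$ with $\mathfrak{d}\mid\mathfrak{a}$ and $\mathbb{N}\mathfrak{a}\le x$ by setting $\mathfrak{e}=\mathfrak{a}/\mathfrak{d}$, so that the conditions become $\mathfrak{d},\mathfrak{e}$ arbitrary integral ideals with $\mathbb{N}(\mathfrak{d}\mathfrak{e})\le x$. Here the two facts I rely on are that $\mathbb{N}$ is completely multiplicative, $\mathbb{N}(\mathfrak{d}\mathfrak{e})=\mathbb{N}\mathfrak{d}\cdot\mathbb{N}\mathfrak{e}$, and that the map $(\mathfrak{d},\mathfrak{e})\mapsto\mathfrak{a}=\mathfrak{d}\mathfrak{e}$ is a bijection between such pairs and pairs $(\mathfrak{a},\mathfrak{d})$ with $\mathfrak{d}\mid\mathfrak{a}$ — both of which hold because the monoid of integral ideals has unique factorization into prime ideals. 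This gives
\begin{align*}
\sum_{\mathbb{N}(\mathfrak{d}\mathfrak{e})\le x}f(\mathfrak{d})g(\mathfrak{e})
=\sum_{\mathbb{N}\mathfrak{d}\le x}f(\mathfrak{d})
 \sum_{\mathbb{N}\mathfrak{e}\le x/\mathbb{N}\mathfrak{d}}g(\mathfrak{e}).
\end{align*}

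Finally I would recognize the inner sum as $G(x/\mathbb{N}\mathfrak{d})$ by the definition $G(y)=\sum_{\mathbb{N}\mathfrak{e}\le y}g(\mathfrak{e})$, and rename $\mathfrak{d}$ as $\mathfrak{a}$ to match the stated form, completing the identity. This is a bookkeeping argument with no analytic content, so there is no serious obstacle; the only point deserving care is to state explicitly that unique factorization of integral ideals guarantees the bijection $(\mathfrak{d},\mathfrak{e})\leftrightarrow(\mathfrak{a},\mathfrak{d})$ and the norm multiplicativity, which are exactly the two properties that make the ordinary Dirichlet-series machinery carry over to the number-field setting.
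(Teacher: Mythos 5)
Your proof is correct and is exactly the standard argument the paper has in mind: the paper gives no proof of its own, citing Apostol's Theorem 3.10, and your adaptation (unfold the convolution, use unique factorization of ideals and multiplicativity of the norm to swap the order of summation, then identify the inner sum as $G(x/\mathbb{N}\mathfrak{a})$) is precisely that classical proof transported to the monoid of integral ideals.
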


\begin{lemma}[{cf. \cite[p.~69, Theorem 3.17]{Ap}}]\label{lem32} 
Keep the notation above. Then 
\begin{align*}
\sum_{ \mathbb{N}\mathfrak{a}\cdot \mathbb{N} \mathfrak{b} \leq x}f( \mathfrak{a})g(\mathfrak{b})
=\sum_{\mathbb{N}\mathfrak{a}\leq \alpha}
f(\mathfrak{a})G\left(\frac{x}{\mathbb{N}\mathfrak{a}}\right)+\sum_{\mathbb{N}\mathfrak{b}\leq \beta}g(\mathfrak{b})F\left(\frac{x}{\mathbb{N}\mathfrak{b}} \right)
-F(\alpha)G(\beta),
\end{align*}
where $\alpha \beta=x$, $F(x)=\sum_{\mathbb{N}\mathfrak{a}\leq x}f(\mathfrak{a})$.
\end{lemma}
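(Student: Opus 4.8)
The plan is to prove the identity by the Dirichlet hyperbola method, transplanted to integral ideals. I would read the left-hand side as an unrestricted double sum over all pairs of integral ideals $(\mathfrak{a},\mathfrak{b})$ subject only to $\mathbb{N}\mathfrak{a}\cdot\mathbb{N}\mathfrak{b}\leq x$, and regard this as a weighted sum over the hyperbolic index region $\mathcal{R}=\{(\mathfrak{a},\mathfrak{b}):\mathbb{N}\mathfrak{a}\cdot\mathbb{N}\mathfrak{b}\leq x\}$. The idea is to cut $\mathcal{R}$ with the two thresholds $\mathbb{N}\mathfrak{a}=\alpha$ and $\mathbb{N}\mathfrak{b}=\beta$, where $\alpha\beta=x$, and recombine by inclusion–exclusion.

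First I would introduce the sub-regions $\mathcal{R}_1=\{(\mathfrak{a},\mathfrak{b})\in\mathcal{R}:\mathbb{N}\mathfrak{a}\leq\alpha\}$ and $\mathcal{R}_2=\{(\mathfrak{a},\mathfrak{b})\in\mathcal{R}:\mathbb{N}\mathfrak{b}\leq\beta\}$. The crucial step is the covering claim $\mathcal{R}=\mathcal{R}_1\cup\mathcal{R}_2$: were a pair to satisfy both $\mathbb{N}\mathfrak{a}>\alpha$ and $\mathbb{N}\mathfrak{b}>\beta$, then $\mathbb{N}\mathfrak{a}\cdot\mathbb{N}\mathfrak{b}>\alpha\beta=x$, contradicting $(\mathfrak{a},\mathfrak{b})\in\mathcal{R}$; hence every pair of $\mathcal{R}$ lies in $\mathcal{R}_1$ or in $\mathcal{R}_2$. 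I would then identify the overlap: $(\mathfrak{a},\mathfrak{b})\in\mathcal{R}_1\cap\mathcal{R}_2$ means $\mathbb{N}\mathfrak{a}\leq\alpha$ and $\mathbb{N}\mathfrak{b}\leq\beta$, whereupon $\mathbb{N}\mathfrak{a}\cdot\mathbb{N}\mathfrak{b}\leq\alpha\beta=x$ holds automatically, so the constraint defining $\mathcal{R}$ is redundant and $\mathcal{R}_1\cap\mathcal{R}_2$ is exactly the rectangle $\{\mathbb{N}\mathfrak{a}\leq\alpha,\ \mathbb{N}\mathfrak{b}\leq\beta\}$.

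Inclusion–exclusion for the weighted sum over these sets of pairs then gives
\begin{align*}
\sum_{\mathcal{R}}f(\mathfrak{a})g(\mathfrak{b})
=\sum_{\mathcal{R}_1}f(\mathfrak{a})g(\mathfrak{b})
+\sum_{\mathcal{R}_2}f(\mathfrak{a})g(\mathfrak{b})
-\sum_{\mathcal{R}_1\cap\mathcal{R}_2}f(\mathfrak{a})g(\mathfrak{b}),
\end{align*}
and it remains to evaluate the three pieces. In $\mathcal{R}_1$ I would sum over $\mathfrak{b}$ first: for fixed $\mathfrak{a}$ with $\mathbb{N}\mathfrak{a}\leq\alpha$, the pair condition reads $\mathbb{N}\mathfrak{b}\leq x/\mathbb{N}\mathfrak{a}$, so the inner sum of $g(\mathfrak{b})$ is $G(x/\mathbb{N}\mathfrak{a})$, producing $\sum_{\mathbb{N}\mathfrak{a}\leq\alpha}f(\mathfrak{a})G(x/\mathbb{N}\mathfrak{a})$; by symmetry $\mathcal{R}_2$ gives $\sum_{\mathbb{N}\mathfrak{b}\leq\beta}g(\mathfrak{b})F(x/\mathbb{N}\mathfrak{b})$; and the rectangle factorizes as $\big(\sum_{\mathbb{N}\mathfrak{a}\leq\alpha}f(\mathfrak{a})\big)\big(\sum_{\mathbb{N}\mathfrak{b}\leq\beta}g(\mathfrak{b})\big)=F(\alpha)G(\beta)$. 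Assembling the three evaluations yields the claimed formula.

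The argument is essentially formal, so the points needing care are bookkeeping rather than analysis, and I regard the covering claim $\mathcal{R}=\mathcal{R}_1\cup\mathcal{R}_2$ as the one step to state precisely. Two minor subtleties deserve a remark: the norms are positive integers with possible repetitions, so the decomposition must be read as a partition of pairs of \emph{ideals} and not of norm values (harmless, since every sum is indexed by ideals and multiplicity is tracked automatically); and $\alpha,\beta$ need not be integers, so $\mathbb{N}\mathfrak{a}\leq\alpha$ simply means $\mathbb{N}\mathfrak{a}\leq\lfloor\alpha\rfloor$. I would finally note that substituting $\mathfrak{a}=\mathfrak{d}\mathfrak{e}$ and using multiplicativity of the norm identifies the left-hand side with $\sum_{\mathbb{N}\mathfrak{a}\leq x}(f*g)(\mathfrak{a})$, linking the statement to Lemma \ref{lem31}, but the direct region decomposition above is the shortest route to the asymmetric splitting with arbitrary $\alpha\beta=x$.
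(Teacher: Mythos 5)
Your proof is correct and follows essentially the same route as the source: the paper gives no independent argument for this lemma but cites Apostol's Theorem 3.17, whose proof is precisely your hyperbola-method decomposition of the region $\mathbb{N}\mathfrak{a}\cdot\mathbb{N}\mathfrak{b}\leq x$ into the two strips $\mathbb{N}\mathfrak{a}\leq\alpha$ and $\mathbb{N}\mathfrak{b}\leq\beta$ with the rectangle $F(\alpha)G(\beta)$ subtracted for the overlap. Your remarks on ideal multiplicity versus norm values and on non-integral $\alpha,\beta$ are exactly the right bookkeeping for transplanting the argument from $\mathbb{Z}$ to integral ideals.
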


First we shall show that the formula $\psi_{K}(x)\sim x$ implies $M_{K}(x)=o(x)$. 
To see this, let
\begin{align*}
H_{K}(x):=\sum_{\mathbb{N}\mathfrak{a}\leq x}
\mu_{K}(\mathfrak{a})\log \mathbb{N}\mathfrak{a}.
\end{align*}
By Theorem \ref{W-theorem} 
it is easily obtained that
\begin{align*}
&|M_{K}(x)\log x -H_{K}(x)|
=\left| \sum_{\mathbb{N}\mathfrak{a}\leq x}\mu_{K}(\mathfrak{a})\log\frac{x}{\mathbb{N}\mathfrak{a}}\right| \nonumber \\
& \leq \sum_{\mathbb{N}\mathfrak{a}\leq x}\log \frac{x}{\mathbb{N}\mathfrak{a}}=\log x\sum_{\mathbb{N}\mathfrak{a}\leq x}1-\sum_{\mathbb{N}\mathfrak{a}\leq x}\log \mathbb{N}\mathfrak{a} \\
&= cx +O\left(x^{1-\frac{1}{d}}\log x\right).
\end{align*}
Hence we obtain 
\begin{align*}
\left|\frac{M_{K}(x)}{x}-\frac{H_{K}(x)}{x\log x}\right|\leq \frac{c}{\log x}
 + O\left(x^{-\frac{1}{d}}\right). 
\end{align*}
This is an algebraic generalization of \cite[p.~91, Theorem 4.13]{Ap}. 
Therefore, it is sufficient to show the following.

\begin{proposition}[{cf. \cite[p.~92, Theorem 4.14]{Ap}}] 
The relation $\psi_{K}(x)\sim x$ implies $H_{K}(x)=o(x\log x)$.
\end{proposition}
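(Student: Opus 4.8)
The plan is to transcribe Apostol's passage from $\psi_K(x)\sim x$ to $H_K(x)=o(x\log x)$ into the ideal setting, replacing the elementary counting of the rational case by Weber's theorem. The starting point is the convolution identity
\begin{align*}
\mu_K(\mathfrak{a})\log\mathbb{N}\mathfrak{a}=-(\mu_K*\Lambda_K)(\mathfrak{a}),
\end{align*}
which I would obtain by differentiating $1/\zeta_K(s)=\sum_{\mathfrak{a}}\mu_K(\mathfrak{a})\mathbb{N}\mathfrak{a}^{-s}$: the derivative equals $-(1/\zeta_K)(\zeta_K'/\zeta_K)$, its Dirichlet coefficients are $-\mu_K(\mathfrak{a})\log\mathbb{N}\mathfrak{a}$, and since $\zeta_K'/\zeta_K=-\sum_{\mathfrak{a}}\Lambda_K(\mathfrak{a})\mathbb{N}\mathfrak{a}^{-s}$, comparing coefficients yields the identity. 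Summing over $\mathbb{N}\mathfrak{a}\le x$ and applying Lemma \ref{lem31} with $f=\mu_K$, $g=\Lambda_K$, $G=\psi_K$ gives the clean formula
\begin{align*}
H_K(x)=-\sum_{\mathbb{N}\mathfrak{a}\le x}\mu_K(\mathfrak{a})\,\psi_K\left(\frac{x}{\mathbb{N}\mathfrak{a}}\right).
\end{align*}

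Next I would insert the hypothesis in the form $\psi_K(y)=y+r(y)$ with $r(y)=o(y)$, splitting
\begin{align*}
-H_K(x)=x\sum_{\mathbb{N}\mathfrak{a}\le x}\frac{\mu_K(\mathfrak{a})}{\mathbb{N}\mathfrak{a}}+\sum_{\mathbb{N}\mathfrak{a}\le x}\mu_K(\mathfrak{a})\,r\left(\frac{x}{\mathbb{N}\mathfrak{a}}\right).
\end{align*}
For the main term I need the bound $\sum_{\mathbb{N}\mathfrak{a}\le x}\mu_K(\mathfrak{a})/\mathbb{N}\mathfrak{a}=O(1)$, the number-field analogue of $|\sum_{n\le x}\mu(n)/n|\le1$. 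I would derive it from Weber's theorem: applying Lemma \ref{lem31} to $\mu_K*1$ gives $\sum_{\mathbb{N}\mathfrak{a}\le x}\mu_K(\mathfrak{a})I(x/\mathbb{N}\mathfrak{a})=1$, and substituting $I(y)=cy+O(y^{1-1/d})$ from Theorem \ref{W-theorem}, together with $\sum_{\mathbb{N}\mathfrak{a}\le x}\mathbb{N}\mathfrak{a}^{-(1-1/d)}=O(x^{1/d})$, isolates $cx\sum_{\mathbb{N}\mathfrak{a}\le x}\mu_K(\mathfrak{a})/\mathbb{N}\mathfrak{a}=1+O(x)$, whence the claimed $O(1)$. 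Thus the main term contributes only $O(x)=o(x\log x)$.

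The remaining term is where the real work lies, and I expect it to be the main obstacle, since $r(x/\mathbb{N}\mathfrak{a})$ fails to be small precisely when $\mathbb{N}\mathfrak{a}$ is close to $x$. I would control it by an $\varepsilon$-splitting: given $\varepsilon>0$, fix $y_0$ with $|r(y)|\le\varepsilon y$ for $y\ge y_0$. On $\mathbb{N}\mathfrak{a}\le x/y_0$ one has $x/\mathbb{N}\mathfrak{a}\ge y_0$, so this part is at most $\varepsilon x\sum_{\mathbb{N}\mathfrak{a}\le x/y_0}1/\mathbb{N}\mathfrak{a}=O(\varepsilon x\log x)$ by the second formula of Theorem \ref{W-theorem}. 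On the complementary range $x/y_0<\mathbb{N}\mathfrak{a}\le x$ the argument $x/\mathbb{N}\mathfrak{a}$ stays below $y_0$, so $r(x/\mathbb{N}\mathfrak{a})$ is bounded by the constant $C(y_0)=\psi_K(y_0)+y_0$, while the number of contributing ideals is $I(x)-I(x/y_0)=O(x)$, again by Weber; this part is $O_\varepsilon(x)$. Collecting the pieces yields $|H_K(x)|\le O(x)+O(\varepsilon x\log x)+O_\varepsilon(x)$, so $\limsup_{x\to\infty}|H_K(x)|/(x\log x)\ll\varepsilon$, and letting $\varepsilon\to0$ gives $H_K(x)=o(x\log x)$. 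The only genuinely $K$-dependent inputs are the two Weber asymptotics $I(x)=cx+O(x^{1-1/d})$ and $\sum_{\mathbb{N}\mathfrak{a}\le x}1/\mathbb{N}\mathfrak{a}=c\log x+\Delta+O(x^{-1/d})$ from Theorem \ref{W-theorem}; everything else transcribes the rational argument verbatim.
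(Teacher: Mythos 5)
Your argument is correct, but it takes a genuinely different decomposition from the paper's. The paper follows Apostol's trick of convolving $\mu_K$ with the single function $1-c\Lambda_K$: defining $J(n)=\sum_{\mathbb{N}\mathfrak{a}=n}\sum_{\mathfrak{b}|\mathfrak{a}}\mu_K(\mathfrak{b})\bigl(1-c\Lambda_K(\mathfrak{a}/\mathfrak{b})\bigr)$, it shows $\sum_{n\le x}J(n)=1+cH_K(x)$ and, via Lemma \ref{lem31}, rewrites this as $\sum_{\mathbb{N}\mathfrak{a}\le x}\mu_K(\mathfrak{a})\bigl(I(x/\mathbb{N}\mathfrak{a})-c\psi_K(x/\mathbb{N}\mathfrak{a})\bigr)$. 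The point of bundling $I-c\psi_K$ is that this combination is $o(y)$ under the hypothesis (Weber gives $I(y)=cy+O(y^{1-1/d})$ and the hypothesis gives $c\psi_K(y)\sim cy$), so a single $\varepsilon$-splitting at $x/C$ finishes the proof, with $\mu_K$ bounded trivially by $1$ throughout. You instead work with the bare identity $H_K(x)=-\sum_{\mathbb{N}\mathfrak{a}\le x}\mu_K(\mathfrak{a})\psi_K(x/\mathbb{N}\mathfrak{a})$ and split $\psi_K(y)=y+o(y)$ by hand, which forces you to control the main term $x\sum_{\mathbb{N}\mathfrak{a}\le x}\mu_K(\mathfrak{a})/\mathbb{N}\mathfrak{a}$ separately; your derivation of the needed bound $\sum_{\mathbb{N}\mathfrak{a}\le x}\mu_K(\mathfrak{a})/\mathbb{N}\mathfrak{a}=O(1)$ from $\sum_{\mathbb{N}\mathfrak{a}\le x}\mu_K(\mathfrak{a})I(x/\mathbb{N}\mathfrak{a})=1$ and Weber's theorem is sound (the error sum $x^{1-1/d}\sum\mathbb{N}\mathfrak{a}^{-(1-1/d)}=O(x)$ checks out by partial summation), and your two-range treatment of the $o$-part matches the paper's. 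The trade-off: the paper's route is shorter and avoids any cancellation input about $\mu_K$, while yours is more modular and produces the auxiliary estimate $\sum_{\mathbb{N}\mathfrak{a}\le x}\mu_K(\mathfrak{a})/\mathbb{N}\mathfrak{a}=O(1)$, a number-field analogue of a classical fact that is of independent interest. Both rest on exactly the same inputs from Theorem \ref{W-theorem}.
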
 

\begin{proof}
Let 
\begin{align*}
J(n) :=
\sum_{\mathbb{N}\mathfrak{a}=n}
\sum_{\mathfrak{b}|\mathfrak{a}}
\mu_{K}(\mathfrak{b})
\left(1-c\Lambda_{K}\left(\frac{\mathfrak{a}}{\mathfrak{b}}\right)\right).
\end{align*}
We see $J(1)=1$ and 
$J(n)=
-c
\sum_{\mathbb{N}\mathfrak{a}=n}
\sum_{\mathfrak{b}|\mathfrak{a}}
\mu_{K}(\mathfrak{b})
\Lambda_{K}(\frac{\mathfrak{a}}{\mathfrak{b}})$ for $n>1$. 
Since
$\left( 1/\zeta_{K}(s) \right)^{\prime} =
\left( 1/\zeta_{K}(s) \right) \cdot \left( -\zeta_{K}^{\prime}(s)/\zeta_{K}(s) \right)$ and 
\begin{align*}
-\frac{\zeta_{K}^{\prime}(s)}{\zeta_{K}(s)}=\sum_{\mathfrak{a}}\frac{\Lambda_{K}(\mathfrak{a})}{\mathbb{N}\mathfrak{a}^{s}} \quad (\sigma >1),
\end{align*} 
we have 
\begin{align*}
-\sum_{\mathbb{N}\mathfrak{a}=n}
\mu_{K}(\mathfrak{a})\log\mathbb{N}\mathfrak{a}
=
\sum_{\mathbb{N}\mathfrak{a}=n}
\sum_{\mathfrak{b}|\mathfrak{a}}
\mu_{K}(\mathfrak{a})
\Lambda_{K}\left(\frac{\mathfrak{a}}{\mathfrak{b}}\right).
\end{align*}
Hence $J(n)=c
\sum_{\mathbb{N}\mathfrak{a}=n}
\mu_{K}(\mathfrak{a})\log\mathbb{N}\mathfrak{a}$.
Then we have
\begin{align*}
\sum_{n\leq x}J(n)=1+cH_{K}(x).
\end{align*}
 Lemma \ref{lem31} enables us to rewrite the above left hand side as
\begin{align*}
\sum_{n\leq x}J(n) &= 
\sum_{\mathbb{N}\mathfrak{a}\cdot \mathbb{N}\mathfrak{b}\leq x}\mu_{K}(\mathfrak{a})\left(1-c\Lambda_{K}(\mathfrak{b})\right) \\
&=\sum_{\mathbb{N}\mathfrak{a}\leq x}\mu_{K}(\mathfrak{a})
\sum_{\mathbb{N}\mathfrak{b}\leq \frac{x}{\mathbb{N}\mathfrak{a}}}\left(1-c\Lambda_{K}(\mathfrak{b})\right) \\
&=\sum_{\mathbb{N}\mathfrak{a}\leq x}
\mu_{K}(\mathfrak{a})
\left(
I\left(\frac{x}{\mathbb{N}\mathfrak{a}}\right)
-c\psi_{K}\left(\frac{x}{\mathbb{N}\mathfrak{a}}\right)
\right).
\end{align*}
Since $I(x)-c\psi_{K}(x)=o(x)$ by our assumption, we see that  
for any $\varepsilon >0$ 
there is a large $C=C(\epsilon)>0$ such that 
$\left|I(y)-c\psi_{K} (y)\right|<\varepsilon y$
provided that $y \geq C$. 
Using Theorem \ref{W-theorem},
\begin{align*}
\sum_{\mathbb{N}\mathfrak{a}\leq \frac{x}{C}}\left|I\left(\frac{x}{\mathbb{N}\mathfrak{a}}\right)-c\psi_{K}\left(\frac{x}{\mathbb{N}\mathfrak{a}}\right)\right|
< \varepsilon \sum_{\mathbb{N}\mathfrak{a}\leq \frac{x}{C}}
\frac{x}{\mathbb{N}\mathfrak{a}} \ll \varepsilon x \log x
\end{align*}
 and
\begin{align*}
\sum_{\frac{x}{C}\leq \mathbb{N}\mathfrak{a}\leq x}\left|I\left(\frac{x}{\mathbb{N}\mathfrak{a}}\right)-c\psi_{K}\left(\frac{x}{\mathbb{N}\mathfrak{a}}\right)\right|=O(x).
\end{align*}
Hence $\sum_{n\leq x}J(n) \ll \varepsilon x\log x$. So we get $H_{K}(x)=o(x\log x)$.
\end{proof}

Next we shall prove that $M_{K}(x)=o(x)$ leads $\psi_{K}(x)\sim x$.
Let $A =2\Delta/c$ and 
$f(\mathfrak{a})=\frac{1}{c}d(\mathfrak{a})-\log\mathbb{N}\mathfrak{a}-A$.
Since 
\begin{align*}
\frac{1}{c}I(x)-\psi_{K}(x)-A
&= \sum_{\mathbb{N}\mathfrak{a} \leq x}\sum_{\mathfrak{b}|\mathfrak{a}}\mu_{K}(\mathfrak{b})\left(\frac{1}{c}d\left(\frac{\mathfrak{a}}{\mathfrak{b}}\right)-\log \mathbb{N}\left(\frac{\mathfrak{a}}{\mathfrak{b}}\right)-A\right)\\
&=\sum_{\mathbb{N}\mathfrak{a}\cdot\mathbb{N}\mathfrak{b}\leq x}\mu_{K}(\mathfrak{b})f(\mathfrak{a}), 
\end{align*}
we have
\begin{align*}
\psi_{K}(x)=\frac{1}{c}I(x)-
\sum_{\mathbb{N}\mathfrak{a}\cdot\mathbb{N}\mathfrak{b}\leq x}
\mu_{K}(\mathfrak{b})f(\mathfrak{a})
-A.
\end{align*}
Therefore, it is enough to show  the next proposition.
\begin{proposition}[{cf. \cite[p.~94, Theorem 4.5]{Ap}}]
The relation $M_{K}(x)=o(x)$ implies 
$\sum_{\mathbb{N}\mathfrak{a}\cdot\mathbb{N}\mathfrak{b}\leq x}
\mu_{K}(\mathfrak{b})f(\mathfrak{a})=o(x)$.
\end{proposition}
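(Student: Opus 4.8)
The plan is to reproduce Apostol's hyperbola argument, the one genuinely arithmetic input being a power-saving bound for the summatory function $F(x):=\sum_{\mathbb{N}\mathfrak{a}\le x}f(\mathfrak{a})$. First I would compute $F(x)$ directly from Theorem \ref{W-theorem} by substituting the four asymptotics into
\begin{align*}
F(x)=\tfrac1c\sum_{\mathbb{N}\mathfrak{a}\le x}d(\mathfrak{a})-\sum_{\mathbb{N}\mathfrak{a}\le x}\log\mathbb{N}\mathfrak{a}-A\,I(x).
\end{align*}
The $x\log x$ contributions cancel automatically, and the $x$-terms cancel exactly because $A=2\Delta/c$: the coefficient of $x$ is $(2\Delta-c)+c-2\Delta=0$. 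The surviving error is dominated by the divisor term, so $F(x)=O\!\left(x^{1-\frac1{2d}}\right)$. This cancellation is the sole reason $f$ and $A$ are defined as they are.

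Next I would apply the hyperbola identity (Lemma \ref{lem32}) with its second function taken to be $\mu_K$ and its first our $f$, so that the lemma's $F$ is the function just estimated and its $G$ equals $M_K$. Writing $T(x)$ for the sum in the proposition and fixing $\alpha\beta=x$, this gives
\begin{align*}
T(x)=\sum_{\mathbb{N}\mathfrak{a}\le\alpha}f(\mathfrak{a})M_K\!\left(\tfrac{x}{\mathbb{N}\mathfrak{a}}\right)
+\sum_{\mathbb{N}\mathfrak{b}\le\beta}\mu_K(\mathfrak{b})F\!\left(\tfrac{x}{\mathbb{N}\mathfrak{b}}\right)
-F(\alpha)M_K(\beta)
=:S_1+S_2+S_3.
\end{align*}
For $S_2$ and $S_3$ I would use only $F(y)=O(y^{1-1/(2d)})$, the trivial bound $M_K(y)=O(y)$, and $I(t)\sim ct$. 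Partial summation gives $\sum_{\mathbb{N}\mathfrak{b}\le\beta}(\mathbb{N}\mathfrak{b})^{-(1-1/(2d))}\ll\beta^{1/(2d)}$, whence $|S_2|\ll x^{1-1/(2d)}\beta^{1/(2d)}=x\,\alpha^{-1/(2d)}$ and likewise $|S_3|\ll\alpha^{1-1/(2d)}\beta=x\,\alpha^{-1/(2d)}$; both are therefore an arbitrarily small multiple of $x$ once $\alpha$ is large.

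For $S_1$ I would invoke the hypothesis $M_K(x)=o(x)$. Since $\mathbb{N}\mathfrak{a}\le\alpha$ forces $x/\mathbb{N}\mathfrak{a}\ge\beta=x/\alpha$, given $\varepsilon>0$ I choose $y_0$ with $|M_K(y)|<\varepsilon y$ for $y\ge y_0$; then for $x$ so large that $\beta\ge y_0$ this bound applies uniformly to every term, so
\begin{align*}
|S_1|\le\varepsilon x\sum_{\mathbb{N}\mathfrak{a}\le\alpha}\frac{|f(\mathfrak{a})|}{\mathbb{N}\mathfrak{a}}
\ll\varepsilon x(\log\alpha)^2,
\end{align*}
the coefficient sum being $\ll(\log\alpha)^2$ by $|f(\mathfrak{a})|\le\tfrac1c d(\mathfrak{a})+\log\mathbb{N}\mathfrak{a}+A$ together with partial summation from the $\sum d(\mathfrak{a})$, $\sum\log\mathbb{N}\mathfrak{a}$ and $\sum 1/\mathbb{N}\mathfrak{a}$ estimates of Theorem \ref{W-theorem}.

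The delicate point, and the step I expect to be the main obstacle, is the order of quantifiers needed to absorb the factor $(\log\alpha)^2$ in $S_1$: the symmetric choice $\alpha=\sqrt x$ would be fatal. Instead I would keep $\alpha$ a \emph{large constant}. Concretely, given a target $\eta>0$ I first fix $\alpha$ so large that the constant multiple of $\alpha^{-1/(2d)}$ bounding $S_2$ and $S_3$ is below $\eta/2$; only afterwards do I choose $\varepsilon$ small enough that the constant multiple of $\varepsilon(\log\alpha)^2$ bounding $S_1$ is below $\eta/2$; finally I let $x\to\infty$, so that $\beta=x/\alpha$ eventually exceeds the $y_0(\varepsilon)$ furnished by $M_K=o(x)$. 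This yields $|T(x)|<\eta x$ for all large $x$, i.e.\ $T(x)=o(x)$, as required. Everything outside this quantifier bookkeeping is routine partial summation.
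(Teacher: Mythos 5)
Your proof is correct and follows essentially the same route as the paper: the cancellation $F(x)=O\left(x^{1-\frac{1}{2d}}\right)$ forced by $A=2\Delta/c$, the hyperbola decomposition into $S_1+S_2+S_3$ via Lemma \ref{lem32}, and the same bounds $S_2, S_3\ll x\,\alpha^{-1/(2d)}$ with $M_K(y)=o(y)$ applied termwise in $S_1$. Your explicit quantifier bookkeeping (fix $\alpha$ first, then $\varepsilon$) is in fact slightly cleaner than the paper's, which picks $\alpha$ depending on $\varepsilon$ and leaves the coefficient sum $\sum_{\mathbb{N}\mathfrak{a}\le\alpha}|f(\mathfrak{a})|/\mathbb{N}\mathfrak{a}$ as an unestimated constant, but both versions close the argument.
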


\begin{proof}
Let $F(x)=\sum_{\mathbb{N}\mathfrak{a}\leq x}f(\mathfrak{a})$. 
By using Lemma \ref{lem32} it is obtained that
\begin{align*}
\left|\sum_{\mathbb{N}\mathfrak{a} \cdot \mathbb{N}\mathfrak{b}\leq x}\mu_{K}(\mathfrak{b})f(\mathfrak{a})\right|
&\leq \sum_{\mathbb{N}\mathfrak{a}\leq \alpha}|f(\mathfrak{a})|\left|M_{K}\left(\frac{x}{\mathbb{N}\mathfrak{a}}\right)\right|
+\sum_{\mathbb{N}\mathfrak{b}\leq \beta}\left|F\left(\frac{x}{\mathbb{N}\mathfrak{b}}\right)\right|+|F(\alpha)M_{K}(\beta)|\\
&=:S_{1}+S_{2}+S_{3}.
\end{align*}
Our goal is to estimate $S_{1}$, $S_{2}$, and $S_{3}$ as $\ll \varepsilon x$ 
for any  small $\varepsilon >0$.
By Theorem \ref{W-theorem} and the definition of $A$ we confirm that 
\begin{align*}
F(x)&=
\frac{1}{c}\sum_{\mathbb{N}\mathfrak{a}\leq x}d(\mathfrak{a})
-\sum_{\mathbb{N}\mathfrak{a}\leq x}\log \mathbb{N}\mathfrak{a}
-A\sum_{\mathbb{N}\mathfrak{a}\leq x}1
=O\left(x^{1-\frac{1}{2d}}\right).
\end{align*}
Form this we obtain
\begin{align*}
S_{2} \ll x^{1-\frac{1}{2d}}\sum_{\mathbb{N}\mathfrak{b} \leq \beta}
\frac{1}{\mathbb{N}\mathfrak{b}^{1-\frac{1}{2d}}} 
\ll 
\frac{x}{\alpha^{\frac{1}{2d}}}.
\end{align*} 
If  we take sufficient large $\alpha$ 
satisfying  $\alpha^{-\frac{1}{2d}}<\varepsilon$, then we have $S_{2} \ll \varepsilon x$.

By the assumption,
there exists $A_{1}>0$ satisfying
\begin{align*}
\left|M_{K}\left(\frac{x}{\mathbb{N}\mathfrak{a}}\right)\right|
< \varepsilon\frac{x}{\mathbb{N}\mathfrak{a}}\quad  
\textrm{for\ }\frac{x}{\mathbb{N}\mathfrak{a}}>A_{1}.
\end{align*}
 Therefore
\begin{align*}
|S_{1}| \leq 
\sum_{\mathbb{N}\mathfrak{a}\leq \alpha}
|f(\mathfrak{a})| \varepsilon \frac{x}{\mathbb{N}\mathfrak{a}}
=
\varepsilon x\sum_{\mathbb{N}\mathfrak{a}\leq \alpha}\frac{|f(\mathfrak{a})|}{\mathbb{N}\mathfrak{a}}\quad \textrm{for\ }x >A_{1}\alpha.
\end{align*}
 Finally, we can see $|S_{3}| \ll
 \alpha^{1-\frac{1}{2d}}\cdot \varepsilon \beta 
 =\varepsilon \alpha^{-\frac{1}{2d}}x \leq \varepsilon x$.
\end{proof}

%%%%%%%%%%%%%%%%%%%%%%%%%%%%%%%%%%%%%%%%%%%%%%%%%%%%%%%%%%%%%%%%%%%%%%%%%%%%%%%%%%%%%%%%%%%%%%%%%%%%%%%%%%%%%%%

%

\quad

\noindent
Yusuke Fujisawa \\
Graduate School of Mathematics,\\
 Nagoya University, \\
 Furocho, Chikusa-ku, Nagoya, 464-8602, Japan.\\
Mail: d09002k@math.nagoya-u.ac.jp

\quad

\noindent
Makoto Minamide\\
Faculty of Science, \\
Kyoto Sangyo University, \\
Kamigamo, Kita-ku, Kyoto, 603-8555, Japan.\\
Mail: minamide@cc.kyoto-su.ac.jp
\end{document}